\newtheorem{thm}{Theorem}[section]
\newtheorem{lem}[thm]{Lemma}
\newtheorem{defn}[thm]{Definition}
\newtheorem{cor}[thm]{Corollary}
\newtheorem{prop}[thm]{Proposition}
\newtheorem{ex}[thm]{Example}
\newtheorem{rem}[thm]{Remark}
\newcommand{\F}{{\mathbb{F}}}
\newcommand{\N}{{\mathbb{N}}}
\newcommand{\R}{{\mathbb{R}}}
\newcommand{\Z}{{\mathbb{Z}}}
\newcommand{\cC}{{\mathcal C}}
\newcommand{\cK}{{\mathcal K}}
\newcommand{\cS}{{\mathcal S}}
\newcommand{\cX}{{\mathcal X}}
\def\vector#1{\mbox{\boldmath $#1$}}
\newcommand{\rank}{\mathop{\rm rank}}
\newcommand{\image}{\mathop{\rm im}}
\newcommand{\kernel}{\mathop{\rm ker}}
\newcommand{\llangle}{\langle\!\langle}
\newcommand{\rrangle}{\rangle\!\rangle}
\newcommand{\positivereal}{\mathbb{R}_{\geq 0}}
\def\positivereal{\mathbb{R}_{\ge 0}}
\def\extendedpositivereal{\overline{\mathbb{R}}_{\ge 0}}
\def\llangle{\langle \! \langle}
\def\rrangle{\rangle \! \rangle}
\def\reduce{\mathcal{R}_d}
\def\shadow{\mathcal{S}_d}
\def\cnd{\mathcal{C}_{n}^{(d)}}
\def\cnmd{\mathcal{C}_{n,m}^{(d)}}
\def\n{{\mathbb N}}
\def\a{{\mathcal A}}
\numberwithin{equation}{section}
\def\filename{\texttt{\jobname.tex}} 
\def\P{{\mathbb P}} 
\def\F{{\mathcal F}} 
\def\E{{\mathbb E}} 
\def\r{{\mathbb R}} 
\def\la{\lambda}
\def\x{{\bf x}} 
\def\y{{\bf y}} 
\def\diag{{\rm diag}} 
\def\lexmin{{\rm lexmin}} 
\def\CC{\mathcal{C}}
\def\K{\mathcal{K}}
\newcommand{\wt}{{\rm wt}}
\newcommand\lref[1]{Lemma~\ref{#1}}
\newcommand\pref[1]{Proposition~\ref{#1}}
\newcommand\cref[1]{Corollary~\ref{#1}}
\begin{document}

\title[Minimum spanning acycle and lifetime of persistent homology]{Minimum spanning acycle and lifetime of persistent homology 
in the Linial-Meshulam process}


\author{Yasuaki Hiraoka}
\address{Institute of Mathematics for Industry, Kyushu University,
744, Motooka, Nishi-ku, Fukuoka, 819-0395, Japan}
\curraddr{}
\email{hiraoka@imi.kyushu-u.ac.jp}
\thanks{}

\author{Tomoyuki Shirai}
\address{Institute of Mathematics for Industry, Kyushu University,
744, Motooka, Nishi-ku, Fukuoka, 819-0395, Japan}
\curraddr{}
\email{shirai@imi.kyushu-u.ac.jp}
\thanks{}


\subjclass[2010]{60C05, 05C80, 05E45}

\date{}

\dedicatory{}

\begin{abstract}
This paper studies a higher dimensional generalization of Frieze's $\zeta(3)$-limit theorem in the
Erd\"os-R\'enyi graph process.  Frieze's theorem states that the
expected weight of the minimum spanning tree converges to $\zeta(3)$ as
the number of vertices goes to infinity.  In this paper, we study the
$d$-Linial-Meshulam process as a model for random simplicial complexes,
where $d=1$ corresponds to the Erd\"os-R\'enyi graph process.  
First, we define spanning acycles as a higher dimensional analogue of spanning trees, and connect its minimum weight to persistent homology. 
Then, our main result shows that 
the expected weight of the minimum spanning acycle 
behaves in $O(n^{d-1})$.
\end{abstract}

\maketitle
{\small {\bf Keywords.} Random Simplicial Complex, Minimum Spanning Acycle, Linial-Meshulam Process, Persistent Homology}


\section{Introduction}\label{sec:intro}

Let $K_n = V_n \sqcup E_n$ be the complete graph with $n$ vertices,
where $V_n$ and $E_n$ are the sets of vertices and edges, respectively.   
We assign a uniform random variable $t_e \in [0,1]$ independently for
each edge $e \in E_n$, and define an increasing stochastic 
process of subgraphs of $K_n$ by 
\begin{equation}
 K_n(t) = V_n \sqcup \{e \in E_n \mid t_e \le t\}, \quad t \in [0,1]. 
\label{eq:erdosrenyi} 
\end{equation}
This process starts from $V_n$ at time $t=0$ and ends up with
 $K_n$ at time $t=1$.  
It is called the Erd\"os-R\'enyi graph process. 
By definition, $K_n(t)$ is equal in law to the Erd\"os-R\'enyi graph $G(n,t)$, 
which is obtained from $K_n$ by retaining each edge with probability $t$
and deleting it with probability $1-t$ independently
\cite{er}. 
We also note that $K_n(t)$ defines a \textit{random} filtration of
$K_n$ parametrized by $t\in[0,1]$. 

Let $\cS^{(1)}$ be the set of spanning trees in $K_n$, i.e., the trees
in $K_n$ containing all vertices.  
Note that every spanning tree consists of $n-1$ edges.
The minimum spanning tree on $K_n$ is defined 
as the spanning tree $T \in \cS^{(1)}$ 
with the minimum weight $\wt(T) = \sum_{e \in T} t_e$. 
Here, it is worth mentioning Kruskal's algorithm \cite{kruskal}
for finding the minimum 
spanning tree. In Kruskal's algorithm, the weights $\{t_e\}_{e \in E_n}$
are treated as the birth times of edges. We start from the isolated
vertices $V_n$ at time $0$, and then we expose an edge $e$ at time $t_e$
in order. If the edge $e$ does 
not create a cycle, we keep it remained in our graph; otherwise we omit it. 
We repeat this procedure until the number of accepted edges becomes $n-1$, and the derived tree will be the minimum spanning tree. 

Frieze \cite{frieze} shows the following significant result about the weight of the minimum spanning tree.

\vspace{0.3cm}\noindent
{\bf Frieze's $\zeta(3)$-Limit Theorem.}\\
\begin{equation}\label{eq:frieze}
\E[\min_{T \in \cS^{(1)}} \wt(T)] \to \zeta(3)=1.202\cdots
\end{equation}
as $n \to \infty$, where $\zeta(s)$ is Riemann's zeta function. 
\vspace{0.3cm}\\

This limit theorem has been investigated further in several directions, e.g., 
a central limit theorem and a tail estimate for the minimum weight, extensions to more general weight distributions and underlying graphs, and asymptotic expansions.
Recent developments can be found in \cite{cfijs} and references therein.
In the present paper, we will explore a higher dimensional generalization of this limit theorem.

One of the main ingredients in the proof of Frieze's theorem
is the following formula connecting the weight of 
the minimum spanning tree to the integrated Betti number:  
\begin{equation}\label{eq:minwgttree}
\min_{T \in \cS^{(1)}} \wt(T)
= \int_0^1 {\beta}_0(t) dt.  
\end{equation}
Here, ${\beta}_0(t)$ is the reduced Betti number of $K_n(t)$, i.e., the rank of the reduced homology 
${H}_0(K_n(t))$, and is equal to the number of connected components in $K_n(t)$ minus $1$. 

This formula is \textit{deterministic} in the sense that 
it is valid for any realization of $\{t_e\}_{e \in E_n}$ and hence for 
the induced filtration $\{K_n(t)\}_{t \in [0,1]}$ of $K_n$. 
Given birth times $\{t_e\}_{e \in E_n}$, 
the reduced Betti number ${\beta}_0(t)$ decreases by $1$ at time $t_e$ 
if two connected components in $K_n(t_e-)$ are joined by adding the edge $e$.   
Since connected components can be regarded as generators of the $0$-th
homology, such a time $t_e$ is viewed as a \textit{death time} of 
the corresponding homology generator and the right-hand side  of (\ref{eq:minwgttree}) 
gives the \textit{lifetime sum} of ${H}_0(K_n(t))$. 
This observation naturally leads us to the notion of \textit{persistent
homology}.  

The persistent homology \cite{elz,zc} (see Section \ref{sec:ph} for details) has
recently been studied as a tool to describe how topological features
behave in a filtered topological space. In particular, it provides
the concepts of the {\it birth} and {\it death times} of each topological feature, which measure the appearance and disappearance of the feature in the filtration. The {\it lifetime} is also defined as the difference between the birth and death
times, and it measures the persistence of the feature in the
filtration.

In this paper, we first show the following theorem as a higher dimensional extension of the formula (\ref{eq:minwgttree}), which is not just a counterpart of (\ref{eq:minwgttree}) but also sheds new light on the link among the lifetime sum of the persistent homology, the weights of the minimum spanning acycles, and the integrated Betti numbers.


\begin{thm}\label{thm:lifetime2_intro}
Let $X$ be a finite simplicial complex satisfying 
\[
{\beta}_{d-1}(X^{(d)}) 
= {\beta}_{d-2}(X^{(d-1)}) 
= 0
\]
for some $1\leq d\leq \dim X$.
Let $\cX = \{X(t)\}_{t\in\positivereal}$ be a filtration of $X$. 
Then, the following identities hold: 
\begin{align}
L_{d-1} 
&= \min_{T \in \cS^{(d)}} \wt(T) - 
\max_{S \in \cS^{(d-1)}} \wt(X_{d-1} \setminus S) \label{eq:int_intro1}\\
&= \int_0^\infty {\beta}_{d-1}(t) dt. \label{eq:int_intro2}
\end{align}
\end{thm}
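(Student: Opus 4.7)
The plan is to prove the two identities in Theorem~\ref{thm:lifetime2_intro} separately after classifying each simplex of the filtration as either a \emph{birth} or a \emph{death} simplex in the standard persistence algorithm. In a generic filtration, the $(d-1)$-dimensional persistence intervals are in bijection with pairs $(\tau,\sigma)$ where $\tau$ is a birth $(d-1)$-simplex (adding it creates a new $(d-1)$-cycle) and $\sigma$ is a death $d$-simplex ($\partial\sigma$ was not yet a boundary). The hypothesis $\beta_{d-1}(X^{(d)})=0$ forces $\beta_{d-1}(X)=0$, so every birth $(d-1)$-simplex is paired with some death $d$-simplex and no interval is infinite.

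Identity~(\ref{eq:int_intro2}) then follows from the elementary observation that $\beta_{d-1}(t)$ counts the number of $(d-1)$-intervals alive at time $t$, so
\[
\int_0^\infty \beta_{d-1}(t)\,dt \;=\; \sum_i(d_i-b_i) \;=\; L_{d-1}.
\]

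For identity~(\ref{eq:int_intro1}), the lifetime sum rewrites as
\[
L_{d-1} \;=\; \sum_{\sigma\text{ a death }d\text{-simplex}} t_\sigma \;-\; \sum_{\tau\text{ a birth }(d-1)\text{-simplex}} t_\tau,
\]
so it suffices to identify the set of death $d$-simplices with a minimum-weight element of $\cS^{(d)}$, and the set of birth $(d-1)$-simplices with the complement $X_{d-1}\setminus S$ of a minimum-weight element of $\cS^{(d-1)}$. This is a matroid statement: under the hypotheses on $X$, the elements of $\cS^{(d)}$ are exactly the bases of the linear matroid on $X_d$ with independence $\{A\subseteq X_d:\partial_d|_A\text{ is injective}\}$, and similarly for $\cS^{(d-1)}$ on $X_{d-1}$. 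Kruskal's greedy algorithm on this matroid, sweeping simplices in increasing weight and accepting $\sigma$ precisely when $\partial\sigma$ is not in the $\partial$-image of previously accepted simplices, produces the minimum-weight basis. The crucial point is that this test agrees step-by-step with the persistent-homology test ``$\partial\sigma\notin B_{d-1}(X(t_\sigma-))$,'' because each previously rejected $d$-simplex has its boundary already in the span of those accepted and so contributes nothing new to the boundary image. The identical argument in dimension $d-1$ identifies the birth $(d-1)$-simplices with $X_{d-1}\setminus S_{\min}$, where $S_{\min}$ is the greedy output. Since $\wt(X_{d-1})$ is fixed, minimising $\wt(S)$ over $\cS^{(d-1)}$ is equivalent to maximising $\wt(X_{d-1}\setminus S)$, producing the ``$\max$'' appearing in~(\ref{eq:int_intro1}).

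The main obstacle I expect is precisely this reconciliation between the greedy acceptance rule and the persistent pairing: because the filtration interleaves simplices of many dimensions, the persistent test at time $t_\sigma$ references the full subcomplex $X(t_\sigma-)$ whereas the matroid greedy on $X_d$ sees only previously processed $d$-simplices. Verifying that the $\partial$-image of the accepted $d$-simplices up to time $t_\sigma-$ coincides with $B_{d-1}(X(t_\sigma-))$ is the key combinatorial lemma and should go through by an induction over the processing order, together with the observation that adding a face of lower dimension to $X(t)$ does not change whether a given $(d-1)$-chain is a boundary of some existing $d$-chain.
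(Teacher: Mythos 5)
Your proof of \eqref{eq:int_intro2} is the same Fubini argument on the diagram measure that the paper uses (Proposition~\ref{prop:lifetime1}), but your route to \eqref{eq:int_intro1} is genuinely different from the paper's. You argue combinatorially: split the lifetime sum as $\sum_{\sigma\ \mathrm{negative}} t_\sigma - \sum_{\tau\ \mathrm{positive}} t_\tau$ over death $d$-simplices and birth $(d-1)$-simplices, then identify each of these two sets with a minimum-weight basis of the column matroid of $\partial_d$ (resp.\ $\partial_{d-1}$) via the Rado--Edmonds greedy correctness, your key lemma being that the span of the boundaries of the accepted $d$-simplices at time $t_\sigma{-}$ equals $B_{d-1}(X(t_\sigma{-}))$. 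The paper instead works algebraically over the graded ring $K[\positivereal]$: it shows that the lifetimes coincide with the exponents of the elementary divisors of the matrix of $\delta_d$ (Lemma~\ref{lem:elementary_divisors_lifefimes}, via Smith normal form), and that their sum is the exponent of the $r$-th determinant divisor, which the Binet--Cauchy expansion over spanning acycles (Corollary~\ref{cor:nonzero_det} and Proposition~\ref{prop:det_expansion}) evaluates to $\min_{T}\wt(T)-\max_{S}\wt(X_{d-1}\setminus S)$ (Lemma~\ref{lem:sum_of_elementary_divisors}). Your approach is more elementary and makes the Kruskal analogy explicit; the paper's buys the weighted determinantal formula of Theorem~\ref{thm:det_expansion} (a weighted Kalai enumeration, with the torsion factors) as a byproduct. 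Two points you should make explicit to close the argument: (i) the hypothesis $\beta_{d-2}(X^{(d-1)})=0$ is exactly what guarantees that the greedy basis of the $\partial_{d-1}$-matroid has finite $(d-2)$-nd homology and hence lies in $\cS^{(d-1)}$ (the $\mathbb{Z}$-torsion condition in Definition~\ref{defn:max} versus the $\mathbb{Q}$-rank condition the matroid sees; cf.\ Remark~\ref{rem:lgamma}), and similarly $\beta_{d-1}(X^{(d)})=0$ identifies $\cS^{(d)}$ with the bases of the $\partial_d$-matroid and kills the infinite intervals; (ii) the word ``generic'' should be removed --- ties in birth times are handled by refining to a simplex-wise filtration, and coincident pairs $t_\tau=t_\sigma$ contribute zero to both sides, so no genericity is needed.
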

Here, $\positivereal$ is the set of nonnegative reals,  
${\beta}_{k}(t)$ is the $k$-th Betti number of $X(t)$, 
$X_{k}$ is the set of $k$-simplices in $X$, 
$X^{(k)}$ is the $k$-dimensional skeleton of $X$, 
$L_{k}$ is the lifetime sum of the $k$-th persistent homology (defined in (\ref{eq:sum})), and $\cS^{(k)}$ is the set of $k$-spanning acycles (Definition~\ref{defn:max}). The proof of this theorem is given in Section \ref{sec:lifetimeformula2}.

The formula (\ref{eq:minwgttree}) is given as a special case $d=1$ of this theorem. It should be remarked that, although only the death times are treated in the Erd\"os-R\'enyi graph process ($d=1$), we also need to study the birth times in the higher dimensional case. This effect causes the second term in (\ref{eq:int_intro1}), and the formulation using the birth and death times in persistent homology fits this extension well. 
Furthermore, we remark that Frieze's theorem can also be expressed by using the lifetime sum as follows:
\[
\E[L_0]\rightarrow \zeta(3)\quad {\rm as}\quad n\rightarrow \infty.
\]

Based on these formulae (\ref{eq:int_intro1}) and (\ref{eq:int_intro2}), we study a higher dimensional generalization 
of the Erd\"os-R\'enyi graph as random simplicial complexes. 
The connectivity and acyclicity of graphs, which
are commonly studied in random graphs, can be interpreted by using $0$-th
and $1$-st homologies, respectively. Then, it is natural to
generalize classical results in the Erd\"os-R\'enyi graph into analogues
expressed by higher dimensional homology of suitable random
simplicial complexes (e.g., see the papers
\cite{kahle,kahle2,lm,lp,mw} and references therein for recent topics of random simplicial complexes).

In this paper, we consider two processes of random simplicial complexes, the Linial-Meshulam process
\cite{lm} and clique complex process \cite{kahle}, both of which can be
regarded as natural generalizations of the Erd\"os-R\'enyi graph
process. Precise definitions of these processes are given in Section
\ref{sec:scprocess}. Our main result shows the following higher dimensional generalization of Frieze's $\zeta(3)$-limit theorem in the Linial-Meshulam process.
\begin{thm}\label{thm:main} 
Let $L_{d-1}$ be the lifetime sum of the $(d-1)$-st persistent homology of 
the $d$-Linial-Meshulam process $(d \ge 1)$ on
 $n$-vertices. Then, 
\begin{equation}\label{eq:main}
 \E[L_{d-1}] = 
O(n^{d-1})
\end{equation}
as $n \to \infty$. 
\end{thm}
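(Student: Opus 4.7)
The strategy is to combine the integral identity (\ref{eq:int_intro2}) with estimates on the expected Betti number of the random complex.

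First I would verify that the hypotheses of Theorem \ref{thm:lifetime2_intro} apply to the $d$-Linial-Meshulam process. Take the ambient complex $X$ to be the $d$-skeleton of the complete $(n-1)$-simplex; then $\beta_{d-1}(X^{(d)}) = \beta_{d-2}(X^{(d-1)}) = 0$ (reduced). In this process the full $(d-1)$-skeleton is already present in $X(0)$ with weight $0$, so $\wt(X_{d-1} \setminus S) = 0$ for every $S \in \cS^{(d-1)}$; equivalently, every $(d-1)$-dimensional persistent homology class is born at time $0$. Theorem \ref{thm:lifetime2_intro} therefore reduces to
\begin{equation*}
L_{d-1} = \int_0^1 \beta_{d-1}(X(t))\,dt,
\end{equation*}
and Fubini yields
\begin{equation*}
\E[L_{d-1}] = \int_0^1 \E[\beta_{d-1}(X(t))]\,dt.
\end{equation*}

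Next I would bound the integrand by splitting $[0,1]$ at a cutoff $t_n = c_d \log n / n$ located just above the Linial-Meshulam threshold for vanishing of $H_{d-1}$. For $t \geq t_n$, a quantitative version of the Meshulam-Wallach theorem implies that $H_{d-1}(X(t))$ is typically generated by the \emph{isolated} $(d-1)$-faces (those contained in no $d$-simplex of $X(t)$), of expected count $\binom{n}{d}(1-t)^{n-d}$. This should yield $\E[\beta_{d-1}(X(t))] \leq C_d \binom{n}{d}(1-t)^{n-d}$ up to lower-order terms, and since $\int_0^1 (1-t)^{n-d}\,dt = 1/(n-d+1)$, the super-threshold contribution is $O(n^{d-1})$. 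For $t < t_n$, I would use the deterministic identity
\begin{equation*}
\beta_{d-1}(X(t)) = \binom{n-1}{d} - f_d(X(t)) + \beta_d(X(t)),
\end{equation*}
which follows from rank-nullity applied to $\partial_d$, using that the complete $(d-1)$-skeleton gives $\dim Z_{d-1}(X(t)) = \binom{n-1}{d}$. Taking expectations with $\E[f_d(X(t))] = t\binom{n}{d+1}$, and bounding $\E[\beta_d(X(t))]$ via expected counts of minimal $d$-cycles (boundaries of $(d+1)$-simplices) in the sub-threshold range, should give an $O(n^{d-1})$ contribution as well.

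The main obstacle is handling the transition window around $t_n$ without losing an extra $\log n$ factor. A naive bound $\beta_{d-1}\leq\binom{n-1}{d}$ across the whole interval $[0, t_n]$ gives only $O(n^{d-1}\log n)$. To remove the $\log n$, one must exploit the precise cancellation between $\binom{n-1}{d} - t\binom{n}{d+1}$ and $\E[\beta_d(X(t))]$ in the critical window $t \in [(d+1)/n,\, t_n]$, where both quantities are comparable in magnitude but opposite in sign contribution to $\E[\beta_{d-1}]$. Quantifying this cancellation—equivalently, showing that the expected number of $d$-cycles in $X(t)$ tracks the excess simplex count $\max(f_d(X(t)) - \binom{n-1}{d},\,0)$ tightly enough throughout the critical regime—is the technical heart of the argument, and is where I expect the heaviest combinatorial estimates to be needed.
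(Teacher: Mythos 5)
Your reduction to $\E[L_{d-1}] = \int_0^1 \E[\beta_{d-1}(X(t))]\,dt$ is exactly the paper's starting point, and your observation that the second term in (\ref{eq:int_intro1}) vanishes because the full $(d-1)$-skeleton is present at time $0$ is correct. But the proposal has a genuine gap, and you have located it yourself: the critical window $t \in [(d+1)/n,\, c_d \log n/n]$. There the crude bound $\beta_{d-1}(t)\le \binom{n-1}{d}$ costs a factor of $\log n$, and the cancellation you would need to remove it --- that $\E[\beta_d(X(t))]$ tracks $\E[f_d(X(t))]-\binom{n-1}{d}$ with additive error $o(n^d/\log n)$ uniformly across the window --- amounts to showing that $\partial_d$ has nearly full rank throughout the critical regime. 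That is precisely the content of the Linial--Peled phase-transition analysis and cannot be waved through; it is not a routine ``heaviest combinatorial estimate'' but the whole difficulty. In addition, above the threshold, ``typically generated by isolated $(d-1)$-faces'' is a with-high-probability statement; converting it into a bound on $\E[\beta_{d-1}(X(t))]$ needs tail estimates you have not supplied, since a rare event still contributes up to $\binom{n-1}{d}$ to the expectation. As written, your argument establishes only $\E[L_{d-1}] = O(n^{d-1}\log n)$.

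The paper avoids the case split entirely by a different mechanism, which is the missing idea. It uses the deterministic Kruskal--Katona-type inequality of Linial--Newman--Peled--Rabinovich, $\rank \partial_{Y,d} \ge \frac{d+1}{n}|Y_d|$, to deduce $\beta_{d-1}(Y) \le \frac{d+1}{n}|\reduce(Y)|$ for \emph{every} complex $Y$ (Corollary~\ref{cor:beta-cardinality}), where $\reduce(Y)$ is the set of $d$-faces whose addition lowers $\beta_{d-1}$. It then bounds $\int_0^1 \E|\reduce(Y_t)|\,dt$ uniformly over all of $[0,1]$ by a coupling/submultiplicativity argument, $\E|\reduce(Y^{(d)}(n,km))| \le N\rho_{n,m}^{k}$ (via Lemma~\ref{lem:coupling}), combined with the Hoffman--Kahle--Paquette estimate $m_{1/2}(n) \le 4\binom{n}{d}$ (Lemma~\ref{lem:hkp}); this yields $\int_0^1 \E|\reduce(Y_t)|\,dt \le 8\binom{n}{d}$ and hence $\E[L_{d-1}] \le \frac{8(d+1)}{n}\binom{n}{d} = O(n^{d-1})$ with no threshold analysis at all. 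To complete your route you would in any case have to import a rank estimate of this strength, so you may as well use it from the start.
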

For $d=1$, we already know that the limiting value is $\zeta(3)$ from Frieze's theorem and this agrees with (\ref{eq:main}). 

This paper is organized as follows. The fundamental concepts of homology and persistent homology are explained in Section \ref{sec:ph_lifetime}. Here, the algebraic formulation using graded modules and the analytic formulation using counting measures are introduced for persistent homology, 
and both formulations are used to derive Theorem \ref{thm:lifetime2_intro} and Theorem \ref{thm:main}. 
In Section \ref{sec:det_formula}, we summarize a determinantal formula of  boundary maps by means of spanning acycles. 
Section \ref{sec:lifetimeformula2} is devoted to proving Theorem \ref{thm:lifetime2_intro}. 
In Section \ref{sec:scprocess}, we explain random persistence diagrams as point processes, and then
introduce the $d$-Linial-Meshulam process and the clique complex process.  The proof of Theorem \ref{thm:main} is presented in Section \ref{sec:main}. Furthermore, we also show a partial result (Theorem \ref{thm:clique}) on the higher dimensional extension of Frieze's theorem for the clique complex process. In Section \ref{sec:conclusion}, we list some conjectures and open questions.

\section{Persistent Homology and Lifetime}\label{sec:ph_lifetime}
\subsection{Homology}\label{sec:homology}
We first recall some fundamental concepts of simplicial homology. For more details, the reader may refer to \cite{munkres}.
Let $X$ be a simplicial complex on a finite set $V=\{1,\dots,n\}$, i.e.,
a collection of nonempty subsets of $V$ which includes all elements in
$V$ and is closed under the operation of taking nonempty subsets.  
An element $\sigma \in X$ with $|\sigma| = k+1$ is called a $k$-simplex and
$k$ is called its dimension. The dimension $\dim X$ of the simplicial
complex $X$ is given by the maximum dimension of simplices in $X$.  We
denote the set of $k$-simplices in $X$ and its cardinality by $X_k$ and
$f_k(X)=|X_k|$, respectively. The $k$-dimensional skeleton of $X$ is defined by
$X^{(k)} = \bigsqcup_{j=0}^k X_j$. In this paper, we only deal with finite simplicial complexes, i.e., $|V|<\infty$.

For a simplicial complex $X$, the boundary map $\partial_k: C_k(X)\rightarrow C_{k-1}(X)$ and 
the chain complex 
\begin{equation}\label{eq:chaincomplex}
	\cdots\longrightarrow C_{k+1}(X)\stackrel{\partial_{k+1}}{\longrightarrow} C_k(X) \stackrel{\partial_{k}}{\longrightarrow}C_{k-1}(X)\longrightarrow\cdots
\end{equation}
in the integer coefficient are defined in a standard way.  
For $\sigma=\{v_0,\dots,v_k\}\in X$, we set its oriented simplex by the 
ordering $v_0<\dots<v_k$ and denote it by $\langle\sigma\rangle=\langle v_0\cdots v_k\rangle$.
Then, the $k$-th homology $H_k(X)=Z_k(X)/B_k(X)$ is defined as the
quotient $\Z$-module of $Z_k(X)=\kernel \partial_k$ and
$B_k(X)=\image \partial_{k+1}$.  

In this paper, we use the reduced homology $\tilde{H}_0(X)$ for $k=0$, which is given by $H_0(X)\simeq \tilde{H}_0(X)\oplus\Z$.  For simplicity, we use the same symbol $H_0(X)$ for the $0$-th reduced homology and omit to specify ``reduced" from now on. 
We also note that the homology can be represented as $H_k(X)\simeq T_k(X)\oplus \Z^{\beta_k(X)}$, 
where $T_k(X)$ and ${\beta}_k(X)$ are called the $k$-th torsion and the $k$-th Betti number, respectively.

For simplicial complexes $Y\subset X$, let $C_k(X,Y)=C_k(X)/C_k(Y)$ be the quotient module. The boundary map in the chain complex (\ref{eq:chaincomplex}) naturally induces the relative chain complex
\begin{equation}\label{eq:relativechain}
	\cdots\longrightarrow C_{k+1}(X,Y)\stackrel{\partial_{k+1}}{\longrightarrow} C_k(X,Y) \stackrel{\partial_{k}}{\longrightarrow}C_{k-1}(X,Y)\longrightarrow\cdots.
\end{equation}
Then, the $k$-th relative homology $H_k(X,Y)$ is defined by the same way as $H_k(X,Y)=Z_k(X,Y)/B_k(X,Y)$, where $Z_k(X,Y)=\kernel \partial_k$ and $B_k(X,Y)=\image \partial_{k+1}$ in (\ref{eq:relativechain}).
It is well known that there exists an exact sequence for a pair $Y\subset X$:
\begin{equation}\label{eq:pairlongexact}
	\cdots \longrightarrow H_{k+1}(X,Y)\longrightarrow H_k(Y)\longrightarrow H_k(X)\longrightarrow H_k(X,Y)
	\longrightarrow H_{k-1}(Y)\longrightarrow\cdots.
\end{equation}

\subsection{Persistent Homology}\label{sec:ph}
Let $\Z_{\geq 0}$ and $\positivereal$ be the sets of nonnegative integers and reals, respectively.  
Let $\cX=\{X(t)\mid t\in \positivereal\}$ be a 
right continuous filtration of a simplicial complex $X$. 
Namely, $X(t)$ is a
subcomplex of $X$, $X(t)\subset X(t')$ for $t\leq t'$, and 
$X(t)=\bigcap_{t<t'}X(t')$.
We assume that there exists a saturation time $T$ such that $X(T)=X$.
For each simplex $\sigma\in X$, let
$t_{\sigma}=\min\{t\in\positivereal\mid \sigma\in X(t)\}$ denote the
birth time of $\sigma$.

Let $K$ be a field of characteristic zero, and let $K[\positivereal]$ be a monoid ring. That is, $K[\positivereal]$ is a $K$-vector space of formal linear combinations of elements in $\positivereal$ equipped with a ring structure
\[
	(at)\cdot (bs)=(ab)(t+s),\quad a,b\in K,~t,s\in\positivereal.
\]
In the following, the elements in $K[\positivereal]$ are expressed by linear combinations of (formal) monomials $az^t$, where $a\in K$, $t\in \positivereal$, and $z$ is an indeterminate. Then, the product of two elements are given by the linear extension of $az^{t}\cdot bz^{s}=abz^{t+s}$. 


For $t\in \positivereal$, let $C_k(X(t))$ be the $K$-vector space spanned by the oriented $k$-simplices in $X(t)$. 
The $k$-th chain group $C_k(\cX)$ of $\cX$ is defined as a graded module over the monoid ring $K[\positivereal]$ by taking a direct sum
\[
	C_k(\cX)=\bigoplus_{t\in \positivereal}C_k (X(t))=
	\{(c_t)\mid c_t\in C_k(X(t)),~t\in\positivereal\},
\]
where the action of a monomial $z^s$ on $C_k(\cX)$ is given by the right shift operator
\[
	z^s\cdot (c_t)=(c'_t),\quad
	c'_t=\left\{\begin{array}{cl}
	c_{t-s},&t\geq s\\
	0,& t<s
	\end{array}\right..
\]

For an oriented simplex $\langle\sigma\rangle$, let us define 
\[
	\llangle\sigma\rrangle = (c_t),\quad
	c_t=\left\{\begin{array}{cl}
	\langle \sigma \rangle,& t=t_\sigma\\
	0,&t\neq t_\sigma
	\end{array}\right..
\]
Then, the set 
$
	\Xi_k=\{\llangle\sigma\rrangle \mid \sigma\in X_k\}
$
forms a basis of $C_k(\cX)$. 
The boundary map $\delta_k: C_k(\cX)\rightarrow C_{k-1}(\cX)$ is defined by the linear extension of 
\begin{equation}\label{eq:boundarymap}
	\delta_k\llangle \sigma\rrangle=\sum_{j=0}^k(-1)^jz^{t_{\sigma}-t_{\sigma_j}}\llangle \sigma_j\rrangle,
\end{equation}
where $\langle\sigma\rangle=\langle v_0\cdots v_k\rangle$ and $\sigma_j=\sigma\setminus \{v_j\}$. We note $t_{\sigma}-t_{\sigma_j}\geq 0$ from $\sigma_j\subset \sigma$. The matrix form of $\delta_k$ using the standard bases $\Xi_k$ and $\Xi_{k-1}$
consists of entries $\pm z^t\in K[\positivereal]$.

The cycle group $Z_k(\cX)$ and the boundary group $B_k(\cX)$ in $C_k(\cX)$ are defined by
\[
	Z_k(\cX)=\kernel \delta_k,\quad B_k(\cX)=\image\delta_{k+1}.
\]
It follows from $\delta_k\circ\delta_{k+1}=0$ that $B_k(\cX)\subset Z_k(\cX)$.
Then, the $k$-th persistent homology is defined by
\[
	H_k(\cX)=Z_k(\cX)/B_k(\cX).
\]
We note that the persistent homology is a graded module over $K[\positivereal]$.

The following theorem is known as the structure theorem of the persistent homology. 
\begin{thm}[\cite{zc}]
There uniquely exist indices $p,q\in\Z_{\geq 0}$ and 
$(b_i,d_i)\in\positivereal^2$ for $i=1,\dots,p$ with $b_i<d_i$ and $b_i\in\positivereal$ for $i=p+1,\dots,p+q$ such that the following isomorphism holds:
\begin{equation}\label{eq:decomposition}
	H_k(\cX)\simeq\bigoplus_{i=1}^p
	\left((z^{b_i})\biggl/(z^{d_i})
	\right)\oplus\bigoplus_{i=p+1}^{p+q}(z^{b_i}),
\end{equation}
where $(z^a)$ expresses an ideal in $K[\positivereal]$ generated by the monomial $z^a$. 
When $p$ or $q$ is zero, the corresponding direct sum is ignored.
\end{thm}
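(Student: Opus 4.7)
The plan is to reduce the continuous filtration to a discrete one and then invoke the classical structure theorem for finitely generated graded modules over a principal ideal domain.

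First, I would exploit the finiteness of $X$. Since $X$ has only finitely many simplices, the set of birth times $\{t_\sigma \mid \sigma \in X\}$ is finite, and right continuity of $\cX$ together with $X(T)=X$ implies that the filtration $\{X(t)\}_{t \in \positivereal}$ is piecewise constant with finitely many jumps at critical times $0 \le s_1 < s_2 < \cdots < s_N$. Consequently, each chain module $C_k(\cX) = \bigoplus_{t \in \positivereal} C_k(X(t))$ is supported (up to the graded $K[\positivereal]$-shift action) on the finite set $\{s_1,\dots,s_N\}$, and a basis of $C_k(\cX)$ is given by the elements $\llangle\sigma\rrangle$ concentrated in degree $t_\sigma \in \{s_1,\dots,s_N\}$. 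In particular, $C_k(\cX)$, and hence its subquotient $H_k(\cX)$, is finitely generated as a graded $K[\positivereal]$-module.

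Second, I would rescale the grading to pass from the monoid ring $K[\positivereal]$ to the polynomial ring $K[z]$. Define the discretized filtration $\tilde{X}(i) = X(s_i)$ for $i = 1, \ldots, N$, and let $\tilde{H}_k$ denote its persistent homology, regarded as a graded $K[z]$-module with $z$ acting as the shift $\tilde{X}(i) \hookrightarrow \tilde{X}(i+1)$. The identification $s_i \leftrightarrow i$ gives a degree-relabeling bijection that identifies the $K[\positivereal]$-module $H_k(\cX)$ with the $K[z]$-module $\tilde{H}_k$, in the sense that the inclusion-induced maps $H_k(X(s_i)) \to H_k(X(s_j))$ match the $z^{j-i}$-action. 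Because $K[z]$ is a graded PID, the classical structure theorem yields a unique decomposition
\begin{equation*}
\tilde{H}_k \;\cong\; \bigoplus_{i=1}^p \Sigma^{a_i} \bigl( K[z]/(z^{n_i}) \bigr) \;\oplus\; \bigoplus_{j=1}^q \Sigma^{c_j} K[z],
\end{equation*}
where $\Sigma^{a}$ denotes a grading shift by $a \in \Z_{\ge 0}$.

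Third, I would translate the decomposition back to $K[\positivereal]$. Each torsion summand $\Sigma^{a_i} K[z]/(z^{n_i})$ corresponds to the interval module with birth $b_i = s_{a_i}$ and death $d_i = s_{a_i + n_i}$, giving $(z^{b_i})/(z^{d_i})$; each free summand $\Sigma^{c_j} K[z]$ corresponds to $(z^{b_{p+j}})$ with $b_{p+j} = s_{c_j}$. Uniqueness of the $(b_i,d_i)$ and $b_{p+j}$ follows from the uniqueness of elementary divisors in the PID structure theorem.

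The main obstacle is the second step: one must verify carefully that the rescaling $s_i \mapsto i$ defines an equivalence of categories between finitely generated graded $K[\positivereal]$-modules supported on $\{s_1,\dots,s_N\}$ and finitely generated graded $K[z]$-modules, and that this equivalence preserves the shift structure induced by the boundary maps $\delta_k$ in \eqref{eq:boundarymap}, whose exponents $t_\sigma - t_{\sigma_j}$ are differences of critical times. Once this discretization is justified, the remaining algebra reduces to standard results about $K[z]$-modules.
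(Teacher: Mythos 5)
The paper does not prove this statement; it is imported verbatim from Zomorodian--Carlsson \cite{zc}, and your argument is essentially the proof given there: pass to the finitely many critical values, identify the resulting persistence module with a finitely generated graded module over the graded PID $K[z]$, apply the structure theorem, and translate the interval summands back into the ideals $(z^{b})/(z^{d})$ and $(z^{b})$ of $K[\positivereal]$. One small point of order: since $K[\positivereal]$ is not Noetherian, you cannot conclude that the subquotient $H_k(\cX)=Z_k(\cX)/B_k(\cX)$ is finitely generated merely because $C_k(\cX)$ is --- finite generation of $Z_k(\cX)$ should be deduced \emph{after} the discretization to $K[z]$, where Noetherianity is available; your step two supplies exactly this, so the proof stands once the two steps are taken in that order.
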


Here $b_i$ and $d_i$ are called the birth and the death times,
respectively, and they measure the events of appearance and
disappearance of topological features in the filtration $\cX$.  
Namely, it expresses that a homology generator is born in $H_k(X(b_i))$, persists in $H_k(X(t))$ for $b_i\leq t\leq d_i$, and dies in $H_k(X(d_i))$. 
The lifetime $l_i$ of the pair $(b_i,d_i)$ is defined by $l_i=d_i-b_i$.
For $p+1\leq i \leq p+ q$, we assign the death time
$d_i=\infty$ as the element of the extended nonnegative reals 
$\overline\R_{\ge 0} := \positivereal \cup \{\infty\}$.  
We remark that the representation (\ref{eq:decomposition}) of the persistent homology is the counterpart to the one using the torsion and free modules in the standard homology. Both are derived from the structure theorem of finitely generated modules over PID. 

The indecomposable decomposition (\ref{eq:decomposition}) of the 
persistent homology can be expressed by using a multiset called the
$k$-th  persistence diagram
\begin{equation}\label{eq:pd}
D_k(\cX)=\{(b_i,d_i)\in\overline{\R}_{\ge 0}^2\mid i=1,\dots,p+q\}.
\end{equation}
Similar to homology, we use the reduced persistent homology for $k=0$, which is defined by deleting one generator with infinite death time from $H_0(\cX)$. For simplicity, we use the same symbol $H_0(\cX)$ and omit to specify ``reduced". 
The persistence diagram $D_0(\cX)$ is also defined in a reduced sense. 

%


\subsection{Lifetime Formula I}\label{sec:lifetime1}
We denote the lifetime sum of the $k$-th persistent homology by 
\begin{equation}\label{eq:sum} 
L_k = \sum_{i=1}^{p+q} (d_i-b_i), 
\end{equation}
where $L_{k}$ is understood as $\infty$ when $q \ge 1$. 

It is often convenient to regard the $k$-th persistence diagram (\ref{eq:pd}) as a counting measure 
\[
	\xi_k = \sum_{0 \le x < y \le \infty} m_{(x,y)} \delta_{(x,y)}
\]
on the set $ \Delta = \{(x,y) \in \extendedpositivereal^2 \mid x \le y \}$, where $\delta_{(x,y)}$ is the delta measure at $(x,y)$ and 
\[
 m_{(x,y)} = |\{1 \le i \le p+q \mid (b_i, d_i) = (x,y)\}|
\]
is the multiplicity. 
We note that
\begin{equation}
{\beta}_k(t) = \xi_k([0,t] \times [t,\infty]),
\label{eq:rankofHd}
\end{equation}
where $\beta_k(t)=\beta_k(X(t))$.

We write 
\[
 \langle \xi_k, f \rangle = \int_{\Delta} f(x,y) \xi_k(dxdy)
\]
for any measurable function $f : \Delta \to \r$ as long as the right-hand 
side makes sense. 
For example, when $f$ is the indicator function $I_A$ of a measurable
set $A \subset \Delta$, $\langle \xi_k, I_A \rangle = \xi_k(A)$ is the 
number of points inside $A$ counted with multiplicity. 
By setting $f(x,y) = y-x$, we also have 
\begin{equation}
\langle \xi_k, f \rangle = \int_{\Delta} (y-x) \xi_k(dxdy)
= \sum_{i=1}^{p+q} (d_i-b_i) = L_k. 
\label{eq:lifetime0}
\end{equation}
Then, we easily obtain the following formula of the lifetime sum.
 
\begin{prop}\label{prop:lifetime1}

\[
L_k = \int_{[0,\infty]} {\beta}_k(t) dt. 
\] 
\end{prop}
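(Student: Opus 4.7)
The plan is to rewrite $\beta_k(t)$ as an integral against the counting measure $\xi_k$ and then apply Fubini's theorem to interchange the order of integration.

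First, I would start from identity (\ref{eq:rankofHd}), which expresses
\[
\beta_k(t) = \xi_k([0,t] \times [t,\infty]) = \int_{\Delta} \mathbb{1}_{\{x \le t \le y\}}\, \xi_k(dxdy).
\]
Integrating over $t \in [0,\infty]$ and applying Fubini--Tonelli (legitimate since the integrand is nonnegative), I would get
\[
\int_{[0,\infty]} \beta_k(t)\, dt
= \int_{\Delta} \Big(\int_{[0,\infty]} \mathbb{1}_{\{x \le t \le y\}}\, dt\Big)\, \xi_k(dxdy)
= \int_{\Delta} (y-x)\, \xi_k(dxdy).
\]
By (\ref{eq:lifetime0}) this last integral equals $L_k$, finishing the proof.

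There isn't really a hard step here: the proposition is essentially a Fubini computation once the key translation (\ref{eq:rankofHd}) between $\beta_k(t)$ and the counting measure $\xi_k$ is in hand. The only small care point is the infinite case: if some $d_i = \infty$ then both sides are $+\infty$ (the integrand $\beta_k(t)$ does not tend to zero, and $L_k$ is defined as $\infty$ whenever $q \ge 1$), so the equality holds trivially; and the Fubini step is justified by nonnegativity regardless.
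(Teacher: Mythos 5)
Your proposal is correct and is essentially the paper's own proof: both arguments combine the identity $\beta_k(t)=\xi_k([0,t]\times[t,\infty])$ from (\ref{eq:rankofHd}) with the representation $L_k=\int_{\Delta}(y-x)\,\xi_k(dxdy)$ from (\ref{eq:lifetime0}) and interchange the order of integration via Fubini (the paper starts from $L_k$ and you start from $\int\beta_k(t)\,dt$, which is the same computation read in reverse). Your remark about the case $q\ge 1$ matches the paper's closing observation that both sides are then $\infty$.
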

\begin{proof}
By Fubini's theorem, 
(\ref{eq:rankofHd}) and (\ref{eq:lifetime0}), we see that 
\begin{eqnarray*}
L_k&=&\int_{\Delta} (y-x) \xi_k(dxdy) \\
&=&\int_{\Delta} \xi_k(dxdy) \int_{[0,\infty]} I(0 \le x \le t \le y \le \infty) dt \\
&=&\int_{[0,\infty]} dt \int_{\Delta} I_{[0,t]}(x) I_{[t,\infty]}(y) \xi_k(dxdy) \\
&=&\int_{[0,\infty]} \beta_k(t) dt. 
\end{eqnarray*}
When $q \ge 1$, the both sides are $\infty$. 
\end{proof}

The persistent homology treated in this paper does not have the
latter part in the indecomposable decomposition (\ref{eq:decomposition}). 
Hence, we always suppose the case $q=0$ from now on. 

\begin{rem}{\rm 
This lifetime formula can be regarded as Little's formula in queuing theory \cite{little}.
}
\end{rem}

\begin{rem}\label{rem:landscape}
{\rm 
The lifetime sum $L_k$ 
can be regarded as the $\ell^1$-norm $\|\vec{l}\|_1$ of a sequence $\vec{l} = (l_i)_{i=1}^p$ of 
lifetimes $l_i=d_i-b_i$ in the $k$-th  persistent homology. 
P.~Bubenik points out that 
the squared $\ell^2$-norm of $\vec{l}$ 
is equal to $4$ times the $L^1$-norm of the persistent landscape \cite{B}.
In order to make clear the connection to the persistent landscape, we derive 
a similar integral formula for the $\ell^2$-norm. 

First, let us define the $(t-s)$-persistent homology \cite{elz} 
\[
	H_k(s,t) = Z_k(X(s)) / (B_k(X(t)) \cap Z_k(X(s))).
\]
We note that 
\[
\rank {H}_k(s,t)=\int_{\Delta} 
I_{[0, s]}(x) I_{[t,\infty]}(y)  \xi_k(dxdy),
\]
and we denote the left-hand side by $\beta_k(s,t)$. 
Then, the integral formula for the $\ell^2$-norm is given by
\[
\|\vec{l}\|_2^2 
= 2 \int_{0 \le s \le t \le \infty} {\beta}_k(s,t) ds dt.
\]
This formula is derived in a similar way using Fubini's theorem:
\begin{align*}
\|\vec{l}\|_2^2 
&= \int_{\Delta} (y-x)^2 \xi_k(dxdy) \\
&= \int_{\Delta} \xi_k(dxdy) \left(\int_{[0,\infty]} I(0 \le x \le t \le y \le \infty) dt\right)^2 \\
&= \int_{[0,\infty]^2} dt ds \int_{\Delta} 
I_{[0,t]}(x) I_{[t,\infty]}(y) I_{[0,s]}(x) I_{[s,\infty]}(y) \xi_k(dxdy) \\ 
&= \int_{[0,\infty]^2} dt ds \int_{\Delta}
I_{[0,t \wedge s]}(x) I_{[t \vee s,\infty]}(y)  \xi_k(dxdy) \\ 
&= 2 \int_{0 \le s \le t \le \infty} \int_{\Delta}
I_{[0, s]}(x) I_{[t,\infty]}(y)  \xi_k(dxdy) \\
 &= 2 \int_{0 \le s \le t \le \infty} \beta_k(s,t) ds dt,
\end{align*}
where $a \wedge b = \min(a,b)$ and $a \vee b = \max(a,b)$. 
}
\end{rem}

\section{Spanning Acycle and Determinantal Formula}\label{sec:det_formula}
In this section, we basically follow the argument in \cite{matrix_tree_thm}.

\subsection{Spanning Acycle} \label{sec:spanningacycle}
Let $X$ be a simplicial complex and let $k\in\N$ be ${k\leq \dim X}$.
For a subset $S \subset X_k$, 
we define a $k$-dimensional subcomplex of $X$ by 
\begin{equation}
X_S = S \sqcup X^{(k-1)}. 
\label{eq:defofXT}
\end{equation}

\begin{defn}\label{defn:max}
{\rm 
A subset $S\subset X_k$ is called a $k$-spanning acycle if 
\begin{enumerate}[(a)]
\item ${H}_k(X_S)=0$, and
\item $|{H}_{k-1}(X_S)| < \infty$. 
\end{enumerate}
The set of $k$-spanning acycles in $X$ is denoted by $\cS^{(k)}$. 
}
\end{defn}

This definition is a natural generalization of the spanning trees of a graph. 
For $\dim X=1$ and $k=1$, $S$ is a subset of edges
and $X_S = V \sqcup S$ is a graph. 
In this case, the conditions (a) and (b) are equivalent that $X_S$ has
 no cycles and $X_S$ is connected, respectively. 
 This means that the $1$-spanning acycle $S$ is nothing but a spanning tree.

\begin{rem}{\rm 
This definition is originally introduced by Kalai \cite{kalai} for $X$ being a $k$-dimensional simplicial complex with the  complete $(k-1)$-skeleton. 
This is  essentially the same as \textit{simplicial spanning tree} given 
in \cite{matrix_tree_thm}. 
}\end{rem}

\begin{ex}{\rm 
Let $\sigma$ be a $3$-simplex and let $X$ be the simplicial complex
 consisting of all proper subsets in $\sigma$. Then, any collections of
 three $2$-simplices in $X$ become $2$-spanning acycles. 
 On the other hand, any collections of two $2$-simplices are not $2$-spanning
 acycles. 
More generally, the set of the $2$-simplices in a $2$-dimensional triangulated sphere minus one $2$-simplex forms a $2$-spanning acycle. }
\end{ex}

\begin{lem}\label{lem:necessary1}
If there exists a $k$-spanning acycle $S$ in $X$, 
then ${|{H}_{k-1}(X^{(k)})| < \infty}$. 
\end{lem}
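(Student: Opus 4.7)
The plan is to use the long exact sequence of the pair $(X^{(k)}, X_S)$ and exploit the fact that $X_S$ and $X^{(k)}$ share the same $(k-1)$-skeleton by construction.

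First I would observe that, since $X^{(k-1)} \subset X_S \subset X^{(k)}$ and $X^{(k)}$ has no simplices of dimension below $k$ beyond those in $X^{(k-1)}$, the relative chain groups satisfy $C_j(X^{(k)}, X_S) = 0$ for all $j \le k-1$. In particular the relative chain complex
\[
\cdots \longrightarrow C_{k+1}(X^{(k)}, X_S) \longrightarrow C_k(X^{(k)}, X_S) \longrightarrow C_{k-1}(X^{(k)}, X_S) \longrightarrow \cdots
\]
has $C_{k-1}(X^{(k)}, X_S) = 0$, and moreover $C_{k+1}(X^{(k)}, X_S)=0$ because $\dim X^{(k)}=k$. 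Hence $H_{k-1}(X^{(k)}, X_S) = 0$ (and $H_k(X^{(k)},X_S)$ is the free module on $X_k\setminus S$, though this won't be needed).

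Next I would invoke the long exact sequence \eqref{eq:pairlongexact} for the pair $(X^{(k)}, X_S)$:
\[
\cdots \longrightarrow H_{k-1}(X_S) \longrightarrow H_{k-1}(X^{(k)}) \longrightarrow H_{k-1}(X^{(k)}, X_S) \longrightarrow \cdots .
\]
Since the right-hand term vanishes by the previous step, the natural map $H_{k-1}(X_S) \to H_{k-1}(X^{(k)})$ is surjective. Because $|H_{k-1}(X_S)| < \infty$ by condition (b) of \dref{defn:max}, the image, which equals $H_{k-1}(X^{(k)})$, is also finite.

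There is no real obstacle here; the argument is a direct homological chase. The only point to double-check is the identification $C_j(X^{(k)}, X_S)=0$ for $j\le k-1$, which follows immediately from $X_S \supset X^{(k-1)}$ and the definition $X_S = S \sqcup X^{(k-1)}$ in \eqref{eq:defofXT}. One could alternatively give a chain-level proof by noting that every $(k-1)$-cycle of $X^{(k)}$ already lies in $X_S$ and differs from a cycle already present in $X_S$, but the relative-sequence route is cleanest.
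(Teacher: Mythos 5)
Your proof is correct and rests on the same key fact as the paper's: the surjection $H_{k-1}(X_S)\twoheadrightarrow H_{k-1}(X^{(k)})$ coming from the two complexes sharing all chains in dimensions below $k$, combined with finiteness of $H_{k-1}(X_S)$ from condition (b). The paper obtains this surjection directly at the chain level (same $(k-1)$-cycles, smaller boundary group for $X_S$) rather than via the long exact sequence of the pair, but this is only a difference of packaging.
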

\begin{proof}
It follows from (\ref{eq:defofXT}) that $C_k(X_S) \subset C_k(X)$ and $C_j(X_S) = C_j(X)$
for $j<k$. Hence, we have 
$\image \partial_k|_S \subset \image \partial_k$ and 
$\kernel \partial_{k-1}|_S = \kernel \partial_{k-1}$, 
where $\partial_{k}|_S$ expresses the restriction of $\partial_{k}$ on $C_{k}(X_S)$. 
This implies that there exists a surjection from 
${H}_{k-1}(X_S)$ to ${H}_{k-1}(X^{(k)})$. 
Hence, if $S$ is a $k$-spanning acycle, the condition (b) implies 
$|{H}_{k-1}(X^{(k)})| < \infty$. 
\end{proof}

\begin{rem}{\rm 
In \cite{lyons}, instead of $k$-spanning acycles, the notion of $k$-bases is considered in the context of matroids.  In that definition, for example,  the set of the 2-simplices in a 2-dimensional triangulated oriented surface with genus $g \ge 1$ minus one 2-simplex forms a 2-base, whereas  there are no $2$-spanning acycles in our definition from Lemma~\ref{lem:necessary1}.
}\end{rem}

For $k\ge 0$, let us define
\[
\gamma_k(X)= f_k(X^{(k)}) - {\beta}_k(X^{(k)}) + {\beta}_{k-1}(X^{(k)})
\]
with ${\beta_{-1}(X^{(0)})=0}$. 
Then, we obtain a complementary characterization of $k$-spanning acycles as follows.

\begin{lem}\label{complementary}
Any two of the three conditions {\rm (a)}, {\rm (b)} in Definition~\ref{defn:max} 
and 
\begin{align}
|S|=\gamma_k(X)
\label{eq:gamma} 
\end{align}
imply the third. 
\end{lem}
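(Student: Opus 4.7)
The plan is to reduce the three conditions to a single numerical identity involving Betti numbers and then read off each implication from it. The key object is the chain complex of $X_S$, which agrees with that of $X^{(k)}$ in every dimension $j\le k-1$ (since $X_S$ and $X^{(k)}$ share the full $(k-1)$-skeleton) and differs only in the top dimension, where $f_k(X_S)=|S|$ while $f_k(X^{(k)})$ is the total number of $k$-simplices.

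First I would observe that because the chain complexes coincide below dimension $k-1$, we have $\beta_j(X_S)=\beta_j(X^{(k)})$ for every $j\le k-2$. Computing the Euler characteristic in two ways (as an alternating sum of face numbers and as an alternating sum of Betti numbers), all the lower-dimensional terms cancel between $X^{(k)}$ and $X_S$, leaving
\begin{align*}
(-1)^k\bigl(f_k(X^{(k)})-|S|\bigr)
&=(-1)^{k-1}\bigl(\beta_{k-1}(X^{(k)})-\beta_{k-1}(X_S)\bigr)\\
&\quad+(-1)^k\bigl(\beta_k(X^{(k)})-\beta_k(X_S)\bigr).
\end{align*}
Rearranging and using the definition of $\gamma_k(X)$ yields the master identity
\[
|S|=\gamma_k(X)+\beta_k(X_S)-\beta_{k-1}(X_S).
\]

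Next I would translate conditions (a) and (b) into vanishing statements on Betti numbers. Condition (a) is equivalent to $\beta_k(X_S)=0$: indeed, $H_k(X_S)$ is a subgroup of $C_k(X_S)$ (there are no $(k+1)$-chains), hence free, so its rank and its order-finiteness agree. Condition (b) is equivalent to $\beta_{k-1}(X_S)=0$, because $H_{k-1}(X_S)$ is finitely generated, so being finite is the same as having no free part. Condition (c) is already stated as $|S|=\gamma_k(X)$. Plugging these equivalences into the master identity, each of the three pairs of conditions forces the remaining one: (a)+(b) gives $|S|=\gamma_k(X)$; (a)+(c) gives $\beta_{k-1}(X_S)=0$; (b)+(c) gives $\beta_k(X_S)=0$.

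I do not anticipate a real obstacle here; the only subtlety to handle carefully is the torsion/rank dichotomy, i.e.\ ensuring that the equivalences ``$H_k(X_S)=0 \iff \beta_k(X_S)=0$'' and ``$|H_{k-1}(X_S)|<\infty \iff \beta_{k-1}(X_S)=0$'' are applied in the right direction in each of the three implications. Once the master identity is in place, the rest is bookkeeping.
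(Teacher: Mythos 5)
Your proof is correct and follows essentially the same route as the paper: comparing the Euler--Poincar\'e formulas for $X_S$ and $X^{(k)}$, cancelling the shared lower-dimensional terms, and reducing everything to the identity $|S|-\gamma_k(X)+\beta_{k-1}(X_S)-\beta_k(X_S)=0$. Your explicit justification of the freeness of $H_k(X_S)$ (no $(k+1)$-chains) and the finiteness/rank dichotomy for $H_{k-1}(X_S)$ is a welcome spelling-out of what the paper leaves as ``obvious.''
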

\begin{proof}
First, we note from (\ref{eq:defofXT}) that
\begin{align*}
 f_j(X_S) &= f_j(X^{(k)}), \quad\quad 0\le j \le k-1,\\
{\beta}_j(X_S) &= {\beta}_j(X^{(k)}), \quad\quad 0\le j \le k-2. 
\end{align*}
By the Euler-Poincar\'e formula, we see that 
\begin{align*}
 \chi(X_S) - \chi(X^{(k)}) 
&= (-1)^k \{f_k(X_S) - f_k(X^{(k)})\} \\
&= \sum_{j=k-1}^k (-1)^j \{{\beta}_j(X_S) - {\beta}_j(X^{(k)})\}, 
\end{align*}
where $\chi$ is the Euler characteristic. 
This is equivalent to 
\[
 \{|S| - \gamma_k(X)\} + 
{\beta}_{k-1}(X_S) - {\beta}_k(X_S)=0, 
\]
and the assertion is obvious from this identity. 
\end{proof}

The cardinality of a $k$-spanning acycle is given as follows. 
\begin{cor}
 If $S$ is a $k$-spanning acycle, then 
\begin{equation}
|S| = \gamma_k(X) = f_k(X^{(k)}) - {\beta}_{k}(X^{(k)}).  
\label{eq:kacyclenum}
\end{equation}
\end{cor}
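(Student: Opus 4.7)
The proof proposal is to combine the two preceding results (Lemma \ref{lem:necessary1} and Lemma \ref{complementary}) to obtain both equalities in \eqref{eq:kacyclenum}.

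First I would get the leftmost equality $|S|=\gamma_k(X)$ for free from Lemma \ref{complementary}: since $S$ is a $k$-spanning acycle, conditions (a) and (b) of Definition \ref{defn:max} are satisfied, so the lemma forces the third condition $|S|=\gamma_k(X)$ to hold.

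Next, to reduce $\gamma_k(X)=f_k(X^{(k)})-\beta_k(X^{(k)})+\beta_{k-1}(X^{(k)})$ to the claimed expression, I need to eliminate the last term, i.e.\ show $\beta_{k-1}(X^{(k)})=0$. This is where Lemma \ref{lem:necessary1} enters: the existence of a $k$-spanning acycle $S$ implies $|\tilde H_{k-1}(X^{(k)})|<\infty$. Since integer homology splits as a torsion part plus a free part $\mathbb{Z}^{\beta_{k-1}(X^{(k)})}$, finiteness of the group forces the free rank to be zero, so $\beta_{k-1}(X^{(k)})=0$. Substituting into the definition of $\gamma_k(X)$ yields the right-hand equality.

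There is essentially no obstacle here, as the corollary is a direct bookkeeping consequence of the two lemmas; the only point that deserves a brief mention in the write-up is the standard fact that a finitely generated abelian group of finite cardinality has vanishing free rank, so that $|\tilde H_{k-1}(X^{(k)})|<\infty$ really does translate to $\beta_{k-1}(X^{(k)})=0$.
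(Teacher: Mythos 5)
Your proposal is correct and matches the paper's own proof, which likewise just invokes Lemma~\ref{complementary} for $|S|=\gamma_k(X)$ and Lemma~\ref{lem:necessary1} for $\beta_{k-1}(X^{(k)})=0$. You merely spell out the details the paper leaves implicit.
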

\begin{proof}
The claim follows from Lemma~\ref{lem:necessary1} and
 Lemma~\ref{complementary}. 
\end{proof}

\begin{lem} For $k \ge 0$, 
\[
 \gamma_k(X) = \dim \kernel \partial_{k-1} 
- \delta_{k,1} + \delta_{k,0}, 
\]
and for $k \ge 1$, 
\begin{equation}
f_{k-1}(X) - \gamma_k(X)
= \gamma_{k-1}(X) - {\beta}_{k-2}(X^{(k-1)}).  
\label{eq:fgamma}
 \end{equation}
\end{lem}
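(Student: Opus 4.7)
The plan is to derive everything from repeated use of the rank-nullity theorem applied to the individual boundary maps, together with the observation that inside the top skeleton $X^{(k)}$ one has $B_k(X^{(k)})=0$. The latter gives the key identity
\[
\dim \kernel \partial_k = \beta_k(X^{(k)}) \quad (k\ge 0),
\]
since the top cycles are precisely the top homology classes. Combining this with rank-nullity applied to $\partial_k\colon C_k(X^{(k)})\to C_{k-1}(X^{(k)})$ yields
\[
\dim \image \partial_k = f_k(X^{(k)})-\beta_k(X^{(k)}).
\]

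For the first identity with $k\ge 2$, I would apply the standard formula $\beta_{k-1}(X^{(k)})=\dim \kernel \partial_{k-1}-\dim \image \partial_k$ (valid whenever reduced/unreduced Betti numbers agree, i.e. $k-1\ge 1$) and substitute the above expression for $\dim \image \partial_k$. This directly produces
\[
\dim \kernel \partial_{k-1} = f_k(X^{(k)})-\beta_k(X^{(k)})+\beta_{k-1}(X^{(k)}) = \gamma_k(X),
\]
which matches the claim since both $\delta_{k,1}$ and $\delta_{k,0}$ vanish. For $k=0$ and $k=1$ I would simply verify the formula by hand, using $\gamma_0(X)=f_0-(f_0-1)=1$ (with $\beta_{-1}=0$) and $\gamma_1(X)=f_0-1$ via the reduced Betti identity $\beta_0(X^{(1)})=f_0-1-\dim\image\partial_1$. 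Here $\dim \kernel \partial_{-1}=0$ and $\dim\kernel\partial_0=f_0$, and the Kronecker deltas $+\delta_{k,0}$ and $-\delta_{k,1}$ are exactly the corrections needed to reconcile the convention $\partial_0=0$ with the use of reduced $\beta_0$.

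For the second identity, once the first is in hand, both sides are easy to massage. Writing $\gamma_k(X)=\dim\kernel\partial_{k-1}-\delta_{k,1}$ (the $\delta_{k,0}$ term drops out because $k\ge 1$) and applying rank-nullity to $\partial_{k-1}$ gives
\[
f_{k-1}(X)-\gamma_k(X)=\dim\image\partial_{k-1}+\delta_{k,1}.
\]
On the other side, substituting the first identity into $\gamma_{k-1}(X)$ and invoking $\beta_{k-2}(X^{(k-1)})=\dim\kernel\partial_{k-2}-\dim\image\partial_{k-1}$ collapses everything to $\dim\image\partial_{k-1}$; the cases $k=1$ and $k=2$ must be checked separately because there $\beta_{k-2}$ is either zero by convention or reduced, and a short direct computation confirms the formula in both.

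The step I expect to be most delicate is not any single calculation but the consistent bookkeeping of the reduced-versus-unreduced Betti number convention and the degenerate indices $k\in\{0,1\}$, which is precisely what the two Kronecker deltas encode. Once those boundary cases are lined up with the convention $\beta_{-1}(X^{(0)})=0$ and $\partial_0=0$, the rest of the proof is a clean chase through rank-nullity.
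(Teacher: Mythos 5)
Your proof is correct and follows essentially the same route as the paper: both arguments combine rank--nullity for the boundary maps with the observation that $B_j(X^{(k)})=0$ in top degree, so that $\beta_k(X^{(k)})=\dim\kernel\partial_k-\delta_{k,0}$ and $\beta_{k-1}(X^{(k)})=\dim\kernel\partial_{k-1}-\delta_{k,1}-\dim\image\partial_k$, the Kronecker deltas accounting for the reduced $H_0$ convention. The only difference is organizational --- the paper carries the deltas through one uniform algebraic computation, whereas you treat the generic case and then check $k\in\{0,1\}$ (resp.\ $k\in\{1,2\}$) by hand --- and your slightly overstated intermediate claim that $\dim\kernel\partial_k=\beta_k(X^{(k)})$ for all $k\ge 0$ is harmless since you correct it at $k=0$ in those boundary checks.
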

\begin{proof}
Set $N_k = \dim \kernel \partial_k$ and 
$I_k = \dim \image \partial_k$. 
Then, we have
\begin{align*}
\gamma_k(X)
&= (N_k + I_k) - (N_k - \delta_{k,0})+ (N_{k-1} - \delta_{k,1} - I_k)\\
&= N_{k-1} - \delta_{k,1} + \delta_{k,0}. 
\end{align*}
Similarly, for $k \ge 1$, 
\begin{eqnarray*}
\lefteqn{f_{k-1}(X) - \gamma_k(X) - \gamma_{k-1}(X) + {\beta}_{k-2}(X^{(k-1)})} \\
&=& (N_{k-1} + I_{k-1}) - (N_{k-1} - \delta_{k,1} + \delta_{k,0}) \\ 
&& - (N_{k-2} - \delta_{k-1,1} + \delta_{k-1,0})
+ (N_{k-2} - I_{k-1} - \delta_{k-2,0}) \\
&=& - \delta_{k,0} = 0.
\end{eqnarray*}
\end{proof}

\begin{prop}
 Let $X$ be a simplicial complex satisfying 
\begin{equation}
 {\beta}_{j-1}(X^{(j)}) = 0, \quad 1 \le j < \dim X. 
\label{eq:acyclic}
\end{equation}
Then, for $0 \le k \le \dim X$, 
\begin{equation}
 \gamma_k(X) = (-1)^k \left\{1 - \sum_{j=0}^{k-1} (-1)^j f_j(X)\right\}. 
\label{eq:gamma_k} 
\end{equation}
\end{prop}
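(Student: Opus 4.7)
The plan is to prove \eqref{eq:gamma_k} by induction on $k$, using the recursion that \eqref{eq:fgamma} gives under the acyclicity hypothesis \eqref{eq:acyclic}.

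First, I would check the base case $k=0$. Since $f_0(X^{(0)})$ is just the number of vertices and $\beta_0$ denotes the \emph{reduced} $0$-th Betti number (so $\beta_0(X^{(0)}) = f_0(X)-1$), together with the convention $\beta_{-1}(X^{(0)})=0$, we get
\[
\gamma_0(X) = f_0(X) - (f_0(X)-1) + 0 = 1,
\]
which matches the right-hand side of \eqref{eq:gamma_k} at $k=0$ (where the empty sum is zero).

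Next, I would extract a clean recursion from \eqref{eq:fgamma}. The hypothesis \eqref{eq:acyclic} says $\beta_{j-1}(X^{(j)}) = 0$ for $1 \le j < \dim X$. Setting $j = k-1$, this yields $\beta_{k-2}(X^{(k-1)}) = 0$ for $2 \le k \le \dim X$, while for $k=1$ the term $\beta_{-1}(X^{(0)})=0$ by convention. Thus for every $1 \le k \le \dim X$, \eqref{eq:fgamma} simplifies to
\[
\gamma_k(X) = f_{k-1}(X) - \gamma_{k-1}(X).
\]

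Finally, assuming by induction that $\gamma_{k-1}(X) = (-1)^{k-1}\bigl\{1 - \sum_{j=0}^{k-2}(-1)^j f_j(X)\bigr\}$, I would substitute into the recursion and rearrange:
\[
\gamma_k(X) = f_{k-1}(X) + (-1)^k\Bigl\{1 - \sum_{j=0}^{k-2}(-1)^j f_j(X)\Bigr\}
= (-1)^k\Bigl\{1 - \sum_{j=0}^{k-1}(-1)^j f_j(X)\Bigr\},
\]
where the last equality absorbs the extra term $f_{k-1}(X) = (-1)^k\cdot(-(-1)^{k-1}f_{k-1}(X))$ into the sum. This completes the induction.

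The proof is essentially bookkeeping, and the only real subtlety is ensuring the hypothesis \eqref{eq:acyclic} covers exactly the vanishing of $\beta_{k-2}(X^{(k-1)})$ needed in \eqref{eq:fgamma} for the full range $1 \le k \le \dim X$; once that index check is in place, the induction is routine.
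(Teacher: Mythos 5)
Your proof is correct and follows essentially the same route as the paper: both derive the recursion $\gamma_k(X)=f_{k-1}(X)-\gamma_{k-1}(X)$ from \eqref{eq:fgamma} together with \eqref{eq:acyclic}, the paper then taking the alternating (telescoping) sum where you run an induction — the same computation in different packaging. Your explicit verification of the base case and of the index range where \eqref{eq:acyclic} applies is a nice touch the paper leaves implicit.
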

\begin{proof}
The equality obviously holds for $k=0$.
From (\ref{eq:fgamma}) and (\ref{eq:acyclic}), 
we have 
\[
 f_{j-1}(X) = \gamma_j(X) + \gamma_{j-1}(X)
\]
for $1 \le j \le \dim X$. Taking the alternating sum of 
the above leads to
\[
 \sum_{j=1}^k (-1)^{j-1} f_{j-1}(X) 
= \gamma_{0}(X) - (-1)^k \gamma_k(X), 
\]
and (\ref{eq:gamma_k}) follows from this. 
\end{proof}

\begin{ex}\label{ex:cardinality}
{\rm
Let $X$ be the $(n-1)$-dimensional maximal simplicial complex on $n$ vertices. 
It is obvious that ${H}_{k-1}(X^{(k)}) = 0$ 
(and thus ${\beta}_{k-1}(X^{(k)})=0$) for
 $k=1,2,\dots,n-1$ and $f_k(X) = {n \choose k+1}$. 
Then, it follows from (\ref{eq:gamma_k}) that 
\[
       \gamma_k(X) = {n-1 \choose k} 
      \]
for $k=0,1,\dots,n-1$. }
\end{ex}

\subsection{Determinantal Formula}\label{sec:determinantalformula}
Let $d\in \N$ be $d\leq \dim X$. Let us express the boundary map $\partial_d: C_d(X)\rightarrow C_{d-1}(X)$ in the matrix form under the standard bases (the sets of oriented simplices). 
For $K\subset X_{d-1}$ and $S\subset X_d$, we denote by $\partial_{KS}$ the submatrix of $\partial_d$ restricted to the rows and columns spanned by the simplices in $K$ and $S$, respectively. 
The submatrices $\partial_K$ and $\partial_S$ are similarly defined. 
\begin{lem}\label{lem:dks}
Let $K, L \subset X_{d-1}$ with $K = X_{d-1} \setminus L$ and $S \subset
 X_d$. Suppose that $|K|=|S| = \gamma_d(X)$. 
Then, $\det \partial_{KS}\neq 0$ if and only if $S\in \cS^{(d)}$ and
 ${H}_{d-1}(X_L)=0$. In this case, 
\[
 |\det \partial_{KS}| = |H_{d-1}(X_S, X_L)|. 
\]
\end{lem}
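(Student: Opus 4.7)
The plan is to identify $\partial_{KS}$ as the boundary of the relative chain complex of the pair $(X_S, X_L)$, and then extract the absolute homological conditions via the long exact sequence combined with a rank count.

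First, since $X_L \subset X_S$ share the $(d-2)$-skeleton and $X_L$ has no $d$-simplices, the relative complex $C_*(X_S, X_L)$ is concentrated in degrees $d-1$ and $d$, with $C_d(X_S, X_L) \cong \mathbb{Z}^S$, $C_{d-1}(X_S, X_L) \cong \mathbb{Z}^{X_{d-1}}/\mathbb{Z}^L \cong \mathbb{Z}^K$, and the induced differential is precisely $\partial_{KS}$. Hence $H_d(X_S, X_L) = \kernel \partial_{KS}$ and $H_{d-1}(X_S, X_L) = \mathbb{Z}^K/\image \partial_{KS}$. Since $|S| = |K| = \gamma_d(X)$, the matrix $\partial_{KS}$ is square, so Smith normal form gives $\det \partial_{KS} \ne 0 \Leftrightarrow \kernel \partial_{KS} = 0$, and in that case $|\mathbb{Z}^K/\image \partial_{KS}| = |\det \partial_{KS}|$. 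This immediately yields the cardinality formula $|\det \partial_{KS}| = |H_{d-1}(X_S, X_L)|$ once nondegeneracy is established.

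Second, I would write down the long exact sequence of the pair, which collapses (using $H_d(X_L) = 0$ and $H_j(X_S, X_L) = 0$ for $j \le d-2$) to
\[
0 \to H_d(X_S) \to H_d(X_S, X_L) \to H_{d-1}(X_L) \to H_{d-1}(X_S) \to H_{d-1}(X_S, X_L) \to 0.
\]
Both $H_d(X_S)$ and $H_{d-1}(X_L)$ equal the cycle group in the top dimension of their respective complex, hence are free; so integral vanishing is equivalent to vanishing over $\mathbb{Q}$. A direct rank count on $C_*(X_S;\mathbb{Q})$ yields
\[
\dim_{\mathbb{Q}} H_{d-1}(X_S;\mathbb{Q}) - \dim_{\mathbb{Q}} H_d(X_S;\mathbb{Q}) = \beta_{d-1}(X^{(d-1)}) - |S|,
\]
and the identity $\gamma_d(X) = \beta_{d-1}(X^{(d-1)})$, obtained by combining (\ref{eq:fgamma}) with the definition of $\gamma_{d-1}(X)$, makes the right side vanish. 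In particular, $H_d(X_S;\mathbb{Q}) = 0$ iff $H_{d-1}(X_S;\mathbb{Q}) = 0$.

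For the forward direction, $\det \partial_{KS} \ne 0$ gives $H_d(X_S, X_L) = 0$, forcing $H_d(X_S) = 0$ through the LES. The rank identity then yields $H_{d-1}(X_S;\mathbb{Q}) = 0$, so $H_{d-1}(X_S)$ is finite and $S \in \cS^{(d)}$. Since $\partial_{KS} \otimes \mathbb{Q}$ is invertible, $H_{d-1}(X_S, X_L;\mathbb{Q}) = 0$ as well, and the rational LES gives $H_{d-1}(X_L;\mathbb{Q}) \cong H_{d-1}(X_S;\mathbb{Q}) = 0$, hence $H_{d-1}(X_L) = 0$ by freeness. The backward direction is a mirror image: if $S \in \cS^{(d)}$ and $H_{d-1}(X_L) = 0$, then $H_d(X_S;\mathbb{Q}) = H_{d-1}(X_L;\mathbb{Q}) = 0$, and the rational LES sandwiches $H_d(X_S, X_L;\mathbb{Q})$ between two zeros, forcing $\det \partial_{KS} \ne 0$. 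The main subtle step will be verifying the identity $\gamma_d(X) = \beta_{d-1}(X^{(d-1)})$, which is what couples the combinatorial cardinality constraint $|S| = |K| = \gamma_d(X)$ to the homological content of the long exact sequence; without it, the forward argument would only yield $H_{d-1}(X_L;\mathbb{Q}) \cong H_{d-1}(X_S;\mathbb{Q})$, which is insufficient to conclude vanishing of either.
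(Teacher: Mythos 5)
Your proposal is correct and follows essentially the same route as the paper: identify $\partial_{KS}$ with the relative boundary of the pair $(X_S,X_L)$, run the long exact sequence, and use freeness of the top-dimensional homology groups. Your inline rank count $\beta_{d-1}(X_S)-\beta_d(X_S)=\beta_{d-1}(X^{(d-1)})-|S|$ together with $\gamma_d(X)=\beta_{d-1}(X^{(d-1)})$ is exactly the content of the paper's Lemma~\ref{complementary}, so the two arguments coincide up to packaging.
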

\begin{proof}
It follows from (\ref{eq:defofXT}) that $X_L$ is a subcomplex of $X_S$ and
\[
 (X_S)_{k} = (X_L)_{k} = X_{k},\quad 0\le  k \le d-2. 
\]
This implies $H_{k}(X_S,X_L) = 0$ for $0\le k \le d-2$. 
Then, we have an exact sequence (see (\ref{eq:pairlongexact}))
\begin{align}\label{eq:longexact}
0\rightarrow {H}_{d}(X_S)\rightarrow 
H_{d}(X_S,X_L)
\rightarrow {H}_{d-1}(X_L)\rightarrow {H}_{d-1}(X_S)\rightarrow \cdots.
\end{align}

Suppose that ${H}_d(X_S) = {H}_{d-1}(X_L) =0$. Then, the exact sequence (\ref{eq:longexact}) leads to
\[
\kernel \partial_{KS}=H_d(X_S,X_L) =0,
\]
which implies $\det \partial_{KS}\neq 0$.

Assume to the contrary that $\det \partial_{KS}\neq 0$. Then, 
$H_d(X_S,X_L)=\kernel \partial_{KS}=0$, and hence 
${H}_d(X_S) = 0$ from (\ref{eq:longexact}). 
This together with $|S| = \gamma_d(X)$ means $S\in\cS^{(d)}$ from 
Lemma~\ref{complementary}, and ${H}_{d-1}(X_S)$ is a finite group. 
Furthermore, (\ref{eq:longexact}) leads to an injection 
\[
	0\rightarrow {H}_{d-1}(X_L)\rightarrow {H}_{d-1}(X_S).
\]
Because of $\dim X_L=d-1$, ${H}_{d-1}(X_L)$ is free. 
Thus, ${H}_{d-1}(X_L)$ must be zero.

It follows from $C_{d-2}(X_S, X_L)=0$ that 
\[
H_{d-1}(X_S, X_L) = C_{d-1}(X_S, X_L) / \image \partial_{KS}.
\]
Hence, $\kernel \partial_{KS} = 0$ implies that $H_{d-1}(X_S, X_L)$ is a finite group of order 
$|\det \partial_{KS}|$. 
\end{proof}

\begin{rem}\label{rem:lgamma} {\rm 
Let $K$ and $L$ be as in Lemma~\ref{lem:dks}. 
It follows from (\ref{eq:fgamma}) that
\begin{align*}
 |L| 
&= f_{d-1}(X) - \gamma_d(X) \\
&= \gamma_{d-1}(X) - {\beta}_{d-2}(X^{(d-1)}).
\end{align*}
Hence, we need the condition ${\beta}_{d-2}(X^{(d-1)}) = 0$ 
for $L \in \cS^{(d-1)}$ as stated in Lemma~\ref{lem:necessary1}. 
}
\end{rem}

\begin{cor}\label{cor:nonzero_det}
Let $X$ be a simplicial complex with 
${\beta}_{d-2}(X^{(d-1)}) = 0$. Suppose 
$K \subset X_{d-1}$ and $S \subset X_d$ satisfy $|K|=|S| =
 \gamma_d(X)$. Then, $\det \partial_{KS}\neq 0$ if and only if $S\in \cS^{(d)}$ and $X_{d-1} \setminus K \in \cS^{(d-1)}$. 
\end{cor}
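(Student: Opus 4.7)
The plan is to derive the corollary directly from Lemma~\ref{lem:dks}, using the extra hypothesis ${\beta}_{d-2}(X^{(d-1)}) = 0$ to upgrade the condition ``${H}_{d-1}(X_L)=0$'' in that lemma into the stronger statement that $L = X_{d-1}\setminus K$ is a $(d-1)$-spanning acycle.

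First I would compute the cardinality of $L$. Since $K \subset X_{d-1}$, $L = X_{d-1}\setminus K$, and $|K| = \gamma_d(X)$, we have $|L| = f_{d-1}(X) - \gamma_d(X)$. Applying identity (\ref{eq:fgamma}) gives
\[
|L| = \gamma_{d-1}(X) - {\beta}_{d-2}(X^{(d-1)}) = \gamma_{d-1}(X),
\]
where the last equality uses the hypothesis ${\beta}_{d-2}(X^{(d-1)}) = 0$. So $L$ automatically has the ``correct'' size to be a $(d-1)$-spanning acycle.

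Next I would show that, under these cardinality conditions, ${H}_{d-1}(X_L) = 0$ is equivalent to $L \in \cS^{(d-1)}$. One direction is immediate from Definition~\ref{defn:max}(a). For the other direction, suppose ${H}_{d-1}(X_L) = 0$. Combining this with $|L| = \gamma_{d-1}(X)$ and applying Lemma~\ref{complementary} at dimension $d-1$ yields condition (b) of Definition~\ref{defn:max}, namely $|{H}_{d-2}(X_L)| < \infty$. Hence $L \in \cS^{(d-1)}$.

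Finally, substituting this equivalence into Lemma~\ref{lem:dks} gives exactly the statement of the corollary: $\det \partial_{KS}\neq 0$ iff $S \in \cS^{(d)}$ and $X_{d-1}\setminus K \in \cS^{(d-1)}$. There is no real obstacle here—the only subtlety is noticing that the hypothesis ${\beta}_{d-2}(X^{(d-1)}) = 0$ is precisely what makes the cardinality of the complement $L$ match $\gamma_{d-1}(X)$, enabling Lemma~\ref{complementary} to close the gap between ``acyclic'' and ``spanning acycle.''
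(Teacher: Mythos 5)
Your proposal is correct and is essentially the paper's own argument: the paper deduces the corollary from Lemma~\ref{lem:dks} together with Remark~\ref{rem:lgamma}, which contains exactly your cardinality computation $|L| = f_{d-1}(X) - \gamma_d(X) = \gamma_{d-1}(X) - \beta_{d-2}(X^{(d-1)}) = \gamma_{d-1}(X)$ via (\ref{eq:fgamma}). Your explicit invocation of Lemma~\ref{complementary} to pass between ``$H_{d-1}(X_L)=0$'' and ``$L\in\cS^{(d-1)}$'' is the step the paper leaves implicit, and you have filled it in correctly.
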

\begin{proof}
The assertion immediately follows from Lemma~\ref{lem:dks} and 
Remark~\ref{rem:lgamma}. 
\end{proof}


Let $\x = (x_\sigma)_{\sigma \in X_{d-1}}$ and $\y = (y_\eta)_{\eta \in
X_d}$ be indeterminates corresponding to the $(d-1)$-simplices and
the $d$-simplices in $X$, respectively. Set 
\begin{equation}
 \partial_d(\x,\y) = \diag(\x) \ \partial_d \ \diag(\y),  
\label{eq:delxy}
\end{equation}
where $\diag(\x)$ is the diagonal matrix with entries being $\x$. 

\begin{prop}\label{prop:det_expansion}
Let $K\subset X_{d-1}$ with $|K|=\gamma_d(X)$.   Then,
\[
\det \partial_d(\x,\y)_K \partial_d(\x,\y)_K^t
= \sum_{S\in\cS^{(d)}}(\det \partial_{KS})^2 \x_K^2 \y_S^2,  
 \]
where $\x_K = \prod_{\sigma \in K} x_{\sigma}$ 
and $\y_S = \prod_{\eta \in S} y_{\eta}$
\end{prop}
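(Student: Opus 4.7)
The plan is to recognize this as a weighted Cauchy-Binet identity, of the sort underlying Kalai's simplicial matrix-tree theorem, and reduce it to Lemma~\ref{lem:dks} to restrict the sum to spanning acycles.

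First, I would observe that $M := \partial_d(\x,\y)_K$ is a $|K| \times f_d(X)$ matrix with $|K| = \gamma_d(X) \leq f_d(X)$, so the Cauchy-Binet formula applies and yields
\begin{equation*}
\det\bigl(M M^t\bigr) \;=\; \sum_{\substack{S \subset X_d \\ |S| = \gamma_d(X)}} \bigl(\det M_S\bigr)^2,
\end{equation*}
where $M_S$ is the square submatrix of $M$ on the columns indexed by $S$.

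Next, I would unwind the definition \eqref{eq:delxy}: the column of $\partial_d(\x,\y)$ indexed by $\eta \in X_d$ equals $y_\eta$ times the column of $\diag(\x)\,\partial_d$ indexed by $\eta$, and the row indexed by $\sigma \in X_{d-1}$ is $x_\sigma$ times the corresponding row of $\partial_d$. Hence for $S \subset X_d$ with $|S| = |K|$,
\begin{equation*}
M_S \;=\; \diag(\x_K)\, \partial_{KS} \, \diag(\y_S),
\end{equation*}
where $\diag(\x_K)$ and $\diag(\y_S)$ are diagonal matrices indexed by $K$ and $S$. Taking determinants gives $\det M_S = \x_K \y_S \det \partial_{KS}$, so $(\det M_S)^2 = \x_K^2 \y_S^2 (\det \partial_{KS})^2$.

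Finally, I would invoke Lemma~\ref{lem:dks} to conclude that whenever $S \notin \cS^{(d)}$, the determinant $\det \partial_{KS}$ vanishes; therefore the Cauchy-Binet sum restricts to $S \in \cS^{(d)}$, giving the claimed identity. Note that no $\beta_{d-2}(X^{(d-1)})$ hypothesis is needed here: for $S \in \cS^{(d)}$ with $L := X_{d-1} \setminus K \notin \cS^{(d-1)}$, the corresponding summand is simply zero by Lemma~\ref{lem:dks}, which is consistent with its inclusion. The only substantive step is the bookkeeping that converts the diagonal scalings into the monomial weights $\x_K^2 \y_S^2$; the rest is a direct application of Cauchy-Binet together with the already-proved determinantal characterization of spanning acycles.
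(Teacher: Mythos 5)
Your proposal is correct and follows essentially the same route as the paper: the paper's proof is exactly Cauchy--Binet applied to $\partial_d(\x,\y)_K$, with the diagonal scalings factored out as $\x_K^2\y_S^2$ and the sum restricted to $\cS^{(d)}$ via Lemma~\ref{lem:dks}. Your added remark that summands with $X_{d-1}\setminus K\notin\cS^{(d-1)}$ vanish but may harmlessly remain in the sum is a correct clarification of a point the paper leaves implicit.
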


\begin{proof}
The Binet-Cauchy formula leads to
\begin{align*}
\det \partial_d(\x,\y)_K \partial_d(\x,\y)_K^t
&= \sum_{\substack{S \subset X_d\\|S|=\gamma_d(X)}}(\det \partial_{KS})^2 \x_K^2 \y_S^2
 \\ 
&= \sum_{S \in \cS^{(d)}}(\det \partial_{KS})^2 \x_K^2 \y_S^2.
\end{align*}
The second equality follows from Lemma \ref{lem:dks}. 
\end{proof}



\begin{lem}
Suppose that $S \in \cS^{(d)}$ and 
$L \in \cS^{(d-1)}$, and set $K = X_{d-1}
 \setminus L$. 
Then,
\begin{equation}\label{eq:det_homology}
 |\det \partial_{KS}|  
= \frac{|{H}_{d-1}(X_S)| \cdot |{H}_{d-2}(X_L)|}{|{H}_{d-2}(X_S)|}.
\end{equation}
\end{lem}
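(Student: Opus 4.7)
The plan is to combine Lemma~\ref{lem:dks} with the long exact sequence of the pair $(X_S, X_L)$. First I would apply Lemma~\ref{lem:dks}: the hypotheses $S \in \cS^{(d)}$ and $L \in \cS^{(d-1)}$ give $H_d(X_S) = 0$ and $H_{d-1}(X_L) = 0$ (and by Remark~\ref{rem:lgamma} the existence of $L \in \cS^{(d-1)}$ forces $\beta_{d-2}(X^{(d-1)}) = 0$, so $|K| = |S| = \gamma_d(X)$ and $\partial_{KS}$ is square). Hence $|\det \partial_{KS}| = |H_{d-1}(X_S, X_L)|$, which reduces the claim to proving
\[
|H_{d-1}(X_S, X_L)| = \frac{|H_{d-1}(X_S)| \cdot |H_{d-2}(X_L)|}{|H_{d-2}(X_S)|}.
\]

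Next I would exploit that $(X_S)_j = (X_L)_j = X_j$ for $0 \le j \le d-2$, so the relative chain complex $C_\bullet(X_S, X_L)$ is concentrated in degrees $d-1$ and $d$; in particular $H_j(X_S, X_L) = 0$ for all $j \le d-2$. Plugging this vanishing, together with $H_d(X_S) = 0$ and $H_{d-1}(X_L) = 0$, into the long exact sequence of the pair $(X_S, X_L)$ collapses the relevant portion to the four-term exact sequence
\[
0 \to H_{d-1}(X_S) \to H_{d-1}(X_S, X_L) \to H_{d-2}(X_L) \to H_{d-2}(X_S) \to 0.
\]

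Finally I would verify that all four terms are finite abelian groups --- the outer two by the spanning-acycle definitions, $H_{d-1}(X_S, X_L)$ since it has order $|\det \partial_{KS}|$, and $H_{d-2}(X_S)$ because the rightmost map in the sequence realizes it as a quotient of the finite group $H_{d-2}(X_L)$ --- and split the four-term exact sequence into two short exact sequences. Multiplicativity of orders along each short exact sequence immediately yields $|H_{d-1}(X_S)| \cdot |H_{d-2}(X_L)| = |H_{d-1}(X_S, X_L)| \cdot |H_{d-2}(X_S)|$, which combined with the identification $|\det \partial_{KS}| = |H_{d-1}(X_S, X_L)|$ gives (\ref{eq:det_homology}). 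The only real obstacle is bookkeeping --- tracking which hypothesis annihilates which term of the long exact sequence and confirming finiteness throughout --- after which no deeper algebraic topology is required.
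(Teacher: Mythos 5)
Your proposal is correct and follows essentially the same route as the paper: identify $|\det \partial_{KS}|$ with $|H_{d-1}(X_S,X_L)|$ via Lemma~\ref{lem:dks}, collapse the long exact sequence of the pair $(X_S,X_L)$ to the four-term exact sequence $0 \to H_{d-1}(X_S) \to H_{d-1}(X_S,X_L) \to H_{d-2}(X_L) \to H_{d-2}(X_S) \to 0$ using $H_{d-1}(X_L)=0$ and the vanishing of the relative chain groups below degree $d-1$, and conclude by multiplicativity of orders of finite groups along the exact sequence. Your write-up is in fact more explicit than the paper's about where each vanishing and finiteness claim comes from, but the underlying argument is identical.
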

\begin{proof}
It follows from $L \in \cS^{(d-1)}$ that we have an exact sequence
\[
0  \rightarrow {H}_{d-1}(X_S)\rightarrow 
{H}_{d-1}(X_S,X_L) 
\rightarrow{H}_{d-2}(X_L)\rightarrow{H}_{d-2}(X_S) 
\rightarrow 0. 
\]
Then, ${H}_{d-2}(X_S)$ and ${H}_{d-1}(X_S,X_L)$ are finite.
Therefore, we have 
\[
 |\det \partial_{KS}|  = |{H}_{d-1}(X_S,X_L)|
= \frac{|{H}_{d-1}(X_S)| \cdot |{H}_{d-2}(X_L)|}{|{H}_{d-2}(X_S)|}.
\]
\end{proof}

From this lemma and Proposition \ref{prop:det_expansion}, we have the following theorem.
\begin{thm}\label{thm:det_expansion}
Suppose $K\subset X_{d-1}$ 
with $|K|=\gamma_d(X)$ and $L=X_{d-1}\setminus K\in\cS^{(d-1)}$. 
 Then,
\begin{equation}\label{eq:det_expansion}
\det \partial_d(\x,\y)_K \partial_d(\x,\y)_K^t
= \sum_{S\in\cS^{(d)}} 
\left(\frac{|{H}_{d-1}(X_S)| \cdot
 |{H}_{d-2}(X_L)|}{|{H}_{d-2}(X_S)|}\right)^2 \x_K^2 \y_S^2. 
 \end{equation}
\end{thm}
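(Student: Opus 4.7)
The plan is to directly substitute the formula for $|\det \partial_{KS}|$ from equation (\ref{eq:det_homology}) into the expansion given by Proposition~\ref{prop:det_expansion}. Everything needed is already in place; the theorem is essentially a combination of the two preceding results, with the hypothesis $L \in \cS^{(d-1)}$ playing the role of making the earlier lemma applicable term by term in the sum.

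First I would invoke Proposition~\ref{prop:det_expansion} (whose hypothesis $|K|=\gamma_d(X)$ is already assumed) to write
\[
\det \partial_d(\x,\y)_K \partial_d(\x,\y)_K^t
= \sum_{S\in\cS^{(d)}} (\det \partial_{KS})^2 \, \x_K^2 \, \y_S^2.
\]
Next, for each $S \in \cS^{(d)}$ appearing in the sum, I would apply the lemma just above the theorem (equation (\ref{eq:det_homology})). The two hypotheses of that lemma are $S \in \cS^{(d)}$, which holds by the index of summation, and $L \in \cS^{(d-1)}$, which is exactly what we have assumed. Hence for every term,
\[
|\det \partial_{KS}| = \frac{|{H}_{d-1}(X_S)| \cdot |{H}_{d-2}(X_L)|}{|{H}_{d-2}(X_S)|}.
\]
Squaring and inserting into the previous display yields the claimed identity. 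Note that $|H_{d-2}(X_L)|$ is a constant (independent of $S$) that can be pulled out of the sum if desired, which is a sanity check that the formula is well-posed.

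There is no real obstacle: the result is a clean amalgamation of Proposition~\ref{prop:det_expansion} and the preceding lemma. The only subtlety worth verifying explicitly is that the assumption $L \in \cS^{(d-1)}$ implicitly carries ${\beta}_{d-2}(X^{(d-1)}) = 0$ through Lemma~\ref{lem:necessary1}, which is the condition needed to ensure $|L|$ matches the cardinality condition in Remark~\ref{rem:lgamma}; this compatibility guarantees the lemma preceding the theorem can be applied to every $S$ in the sum without further restriction.
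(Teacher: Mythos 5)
Your proposal is correct and is exactly the paper's argument: the paper states the theorem follows "from this lemma and Proposition~\ref{prop:det_expansion}," i.e., substituting \eqref{eq:det_homology} term by term into the Binet--Cauchy expansion of Proposition~\ref{prop:det_expansion}, just as you do. Your added observation that $L\in\cS^{(d-1)}$ forces $\beta_{d-2}(X^{(d-1)})=0$ (via Lemma~\ref{lem:necessary1} and Remark~\ref{rem:lgamma}) is a correct consistency check, not a gap.
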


\begin{ex}{\rm 
Let $X$ be a triangulation of a $2$-dimensional sphere. 
For  $X_1 \setminus K \in \cS^{(1)}$, we have
\[
\det \partial_2(\x,\y)_K \partial_2(\x,\y)_K^t 
= \x_K^2 \sum_{S\in\cS^{(2)}} |{H}_{1}(X_S)|^2 \y_S^2
= \x_K^2 \sum_{S\in\cS^{(2)}} \y_S^2.  
\]
}\end{ex}

\begin{ex}{\rm 
Let $X$ be the $(n-1)$-dimensional maximal simplicial complex on $n$ vertices and $L$ be a
set of $(d-1)$-simplices ($d<n$) in $X$ with one fixed vertex. Let us set
$\x=(x_\sigma)$ and $\y=(y_\eta)$ to be $x_\sigma =1$ and $y_\eta=1$ for
all $\sigma\in X_{d-1}$ and $\eta\in X_d$. Then, Theorem
\ref{thm:det_expansion} is reduced to the Kalai's result \cite{kalai}.
Namely, because of ${H}_{d-2}(X_L)=0$ and ${H}_{d-2}(X_S)=0$
in this setting, the equality (\ref{eq:det_expansion}) becomes
\[
n^{{n-2 \choose d}}=\sum_{S\in\cS^{(d)}}|{H}_{d-1}(X_S)|^2. 
\]
Here, the left-hand side is derived by showing that the eigenvalues of 
$\partial_d(\x,\y)_K \partial_d(\x,\y)_K^t$ are given by $1$ and $n$
 with multiplicities ${n-2 \choose d-1}$ and ${n-2 \choose d}$,
 respectively.  
The case $d=1$ is the Cayley's formula counting the number of spanning
 trees.  
}\end{ex}

\section{Lifetime Formula II}\label{sec:lifetimeformula2}
In this section, we give a proof of Theorem \ref{thm:lifetime2_intro}.
Throughout this section, let us set $d\in \N$ as $d\leq \dim X$. 
Furthermore, we assume that the simplicial complex $X$ satisfies
\[
 {\beta}_{d-1}(X^{(d)}) = {\beta}_{d-2}(X^{(d-1)}) = 0.
\]

Let $\cX=\{X(t)\mid t\in\positivereal\}$ be a filtration of $X$. 
A minimum $d$-spanning acycle of the filtration $\cX$ is defined as a spanning acycle $S\in \cS^{(d)}$ with the minimum weight $\wt(S) = \sum_{\sigma\in S} t_\sigma$ among $\cS^{(d)}$, where $t_\sigma$ is the birth time of the simplex $\sigma$.

We denote  by $M$ the matrix form of the $d$-th boundary map $\delta_d$ of the persistent homology $H_*(\cX)$ under the standard bases $\Xi_d,\Xi_{d-1}$. We also denote its evaluation at $z=1$ by $D = M|_{z=1}$, which is a matrix form of $\partial_d$. 
It should be noted that 
\[
\rank\delta_d=\rank\partial_d=f_d(X)-\kernel \partial_d=f_d(X^{(d)})-{\beta}_d(X^{(d)})=\gamma_d(X).
\]
Let us denote the elementary divisors of $M$ by $d_1=z^{e_1},\dots,d_{r}=z^{e_r}$, where $r=\gamma_d(X)$.

\begin{prop}
Let $K\subset X_{d-1}$ with $|K|=\gamma_d(X)$.  
Then, 
\begin{equation}
\det M_K M_K^t
=z^{2e(K)}\sum_{S\in\cS^{(d)}}(\det D_{KS})^2z^{2\tau(S)},
\label{eq:aks}
\end{equation}
where 
\[
\tau(S)=\wt(S)-\min_{S\in\cS^{(d)}}\wt(S),\quad
e(K)=\min_{S\in\cS^{(d)}}\wt(S)-\wt(K),
\]
and $e(K)$ is nonnegative.
\end{prop}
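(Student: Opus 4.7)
The plan is to reduce $M$ to the integer boundary matrix $D$ by a diagonal conjugation and then reuse the Binet--Cauchy argument of Proposition \ref{prop:det_expansion}. From the definition (\ref{eq:boundarymap}), the entry of $M$ at row $\tau\in X_{d-1}$, column $\sigma\in X_d$ is $\pm z^{t_\sigma-t_\tau}$ when $\tau\subset\sigma$ and $0$ otherwise, so
\[
M \;=\; \diag(z^{-t_\tau})_{\tau\in X_{d-1}}\cdot D\cdot \diag(z^{t_\sigma})_{\sigma\in X_d}.
\]
Equivalently, $M$ is the parametrized boundary matrix $\partial_d(\bx,\by)$ from (\ref{eq:delxy}) specialized at $x_\tau=z^{-t_\tau}$ and $y_\sigma=z^{t_\sigma}$.

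Next, I would restrict to the rows indexed by $K$, form $M_K M_K^t$, pull the two copies of $\diag(z^{-t_\tau})_{\tau\in K}$ out of the determinant to extract a factor $z^{-2\wt(K)}$, and apply the Binet--Cauchy expansion together with Lemma \ref{lem:dks} (exactly as in the proof of Proposition \ref{prop:det_expansion}) to obtain
\[
\det(M_K M_K^t) \;=\; z^{-2\wt(K)}\sum_{S\in\cS^{(d)}}(\det D_{KS})^2\,z^{2\wt(S)}.
\]
Writing $\wt(S)-\wt(K) = \tau(S)+e(K)$ with $m=\min_{S\in\cS^{(d)}}\wt(S)$, $\wt(S) = m+\tau(S)$, and $m-\wt(K) = e(K)$, factors the right-hand side as $z^{2e(K)}\sum_{S}(\det D_{KS})^2 z^{2\tau(S)}$, which is exactly (\ref{eq:aks}).

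The main obstacle is verifying non-negativity of $e(K)$. The key observation is that in any filtration the inclusion $\tau\subset\sigma$ forces $t_\tau\leq t_\sigma$, so every nonzero entry of $M$ is a monomial $\pm z^{t_\sigma-t_\tau}$ with non-negative exponent. Consequently $M_K M_K^t$ has polynomial entries in $z$, and $\det(M_K M_K^t)$ is itself a polynomial in $z$ with only non-negative powers. The right-hand side of (\ref{eq:aks}) is $z^{2e(K)}$ times a polynomial with non-negative integer coefficients $(\det D_{KS})^2$ and non-negative exponents $2\tau(S)$, so no cancellation can clear a negative power arising from $z^{2e(K)}$. By Corollary \ref{cor:nonzero_det}, the sum is non-zero precisely when $X_{d-1}\setminus K\in\cS^{(d-1)}$, in which case every $S\in\cS^{(d)}$ contributes; a minimum-weight acycle $S^{*}$ then has $\tau(S^{*})=0$ and contributes the lowest-degree monomial, of exponent $2e(K)$, which is therefore forced to be non-negative. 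In the complementary case both sides of (\ref{eq:aks}) vanish, so the identity is trivially true.
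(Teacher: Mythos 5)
Your proof is correct and follows essentially the same route as the paper: identify $M$ with $\partial_d(\x,\y)$ at $x_\tau=z^{-t_\tau}$, $y_\sigma=z^{t_\sigma}$, apply Proposition~\ref{prop:det_expansion} (Binet--Cauchy plus Lemma~\ref{lem:dks}), and rearrange the exponents. Your degree argument for $e(K)\ge 0$ is a slightly more elaborate version of the paper's one-line observation that $t_\tau\le t_\sigma$ for $\tau\subset\sigma$, and your explicit remark that the identity is trivially true when $X_{d-1}\setminus K\notin\cS^{(d-1)}$ is a reasonable (indeed careful) way to handle the degenerate case.
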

\begin{proof}
By setting $\x = (z^{-t_\sigma})_{\sigma \in X_{d-1}}$ and 
$\y = (z^{t_\eta})_{\eta \in X_d}$,  
$\partial_d(\x,\y)$ defined in (\ref{eq:delxy}) coincides with 
$\delta_d : C_d(\cX) \to C_{d-1}(\cX)$. 
By Proposition~\ref{prop:det_expansion}, we obtain 
\begin{align*}
\det M_K M_K^t 
&= \sum_{S\in\cS^{(d)}}(\det M_{KS})^2\\
&= \sum_{S\in\cS^{(d)}}(\det D_{KS})^2 z^{2(\wt(S) - \wt(K))} \\
&= z^{2e(K)}\sum_{S\in\cS^{(d)}}(\det D_{KS})^2z^{2\tau(S)}. 
\end{align*}
The claim $e(K)\geq 0$ follows from the fact $t_\sigma\leq t_\eta$ for $\sigma\subset \eta$.
\end{proof}

\begin{lem} \label{lem:sum_of_elementary_divisors}
For the elementary divisors $d_1=z^{e_1},\dots,d_{r}=z^{e_r}$ of $M$, 
\[
\min_{K \in \cS_c^{(d-1)}} e(K)=e_1+\dots+e_r,
\]
where $\cS_c^{(d-1)} = \{X_{d-1} \setminus L \mid L \in \cS^{(d-1)}\}$. 
\end{lem}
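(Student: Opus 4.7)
The plan is to compute the $r$-th determinantal divisor $\Delta_r(M)$ of the boundary matrix $M$ in two different ways and equate the results. Recall the standard algebraic fact that for a rank-$r$ matrix over a PID, the product $d_1 d_2 \cdots d_r$ of its elementary divisors agrees (up to units) with $\Delta_r(M)$, the GCD of all $r \times r$ minors of the matrix. Although $K[\positivereal]$ is not literally a PID, the matrix $M$ has only finitely many distinct exponents among its entries, so after an order-preserving rescaling one may regard $M$ as a matrix over $K[z^{1/N}]$ for some positive integer $N$, where the usual Smith normal form theory applies. Consequently $\Delta_r(M) = z^{e_1 + \cdots + e_r}$.

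Next I would compute $\Delta_r(M)$ directly from its minors. Exactly as in the derivation leading to (\ref{eq:aks}), every $r \times r$ minor of $M$ has the form
\[
\det M_{KS} = (\pm 1)\, z^{\wt(S) - \wt(K)}\, \det \partial_{KS},
\]
where $|K| = |S| = r = \gamma_d(X)$. Invoking Corollary~\ref{cor:nonzero_det} (which uses the blanket assumption ${\beta}_{d-2}(X^{(d-1)}) = 0$ imposed at the start of this section), this minor is nonzero if and only if $S \in \cS^{(d)}$ and $K \in \cS_c^{(d-1)}$, in which case it is a nonzero scalar multiple of the pure monomial $z^{\wt(S) - \wt(K)}$. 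A small preliminary check: whenever $L := X_{d-1} \setminus K \in \cS^{(d-1)}$, the cardinality of $K$ is automatically $\gamma_d(X)$ by (\ref{eq:kacyclenum}) and (\ref{eq:fgamma}), so the minors being considered do have the correct size. Since the GCD of a finite family of nonzero scalar multiples of monomials in $K[\positivereal]$ is the monomial with the smallest exponent, we obtain
\[
\Delta_r(M) = z^{\mu}, \qquad \mu = \min\bigl\{\wt(S) - \wt(K) : K \in \cS_c^{(d-1)},\, S \in \cS^{(d)}\bigr\}.
\]
Because the minimization over $S$ and the maximization over $K$ are independent, $\mu = \min_{S \in \cS^{(d)}} \wt(S) - \max_{K \in \cS_c^{(d-1)}} \wt(K) = \min_{K \in \cS_c^{(d-1)}} e(K)$.

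Equating the two expressions for $\Delta_r(M)$ then gives the claim $e_1 + \cdots + e_r = \min_{K \in \cS_c^{(d-1)}} e(K)$. The main conceptual obstacle is justifying that the classical correspondence between elementary divisors and determinantal divisors transfers to the monoid ring $K[\positivereal]$; once the finiteness of birth times is used to land inside a genuine polynomial ring, everything else is the routine observation that the GCD of pure monomials is given by the smallest exponent.
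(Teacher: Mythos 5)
Your argument is correct and follows essentially the same route as the paper's proof: identify $d_1\cdots d_r$ with the $r$-th determinantal divisor $\Delta_r(M)$, use Corollary~\ref{cor:nonzero_det} to see that the nonvanishing $r\times r$ minors are exactly those indexed by $S\in\cS^{(d)}$ and $K\in\cS_c^{(d-1)}$, each a scalar times $z^{\wt(S)-\wt(K)}$, and read off the exponent of the gcd as $\min_K e(K)$. The extra care you take (reducing the monoid ring to a genuine polynomial ring to justify the Smith normal form, and checking $|K|=\gamma_d(X)$ for $K\in\cS_c^{(d-1)}$) fills in details the paper leaves implicit.
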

\begin{proof}

Let us note that the product $d_1\cdots d_r$ is equal to the $r$-th determinant divisor
\[
\Delta_r(M) 
=\gcd\{\det M_{KS}\mid K\subset X_{d-1}, S\subset X_d,|K|=|S|=r\}.
\]
Recall from Corollary \ref{cor:nonzero_det} that 
$\det \partial_{KS}\neq 0$ if and only if $S\in \cS^{(d)}$ and $X_{d-1} \setminus K \in \cS^{(d-1)}$.
Then, the exponent of $\Delta_r(M)$ is equal to the $\min_{K \in \cS_c^{(d-1)}} e(K)$, and hence this leads to the formula.
\end{proof}

%

%

Now, let us consider the $(d-1)$-st persistent homology $H_{d-1}(\cX)$ and its lifetimes.
Let $p$ and $q$ be the indices appearing in the indecomposable decomposition (\ref{eq:decomposition}) for $H_{d-1}(\cX)$. Because of  ${\beta}_{d-1}(X^{(d)})=0$, we have $q=0$. Furthermore, it follows that $p \leq
\dim\kernel \delta_{d-1}=\rank \delta_d=r$. In case of $p<r$, we
add $l_i=0$ for $i=p+1,\dots,r$ to the list of the lifetimes
$l_1,\dots,l_p$.

\begin{lem}\label{lem:elementary_divisors_lifefimes}
$\{e_1,\dots,e_r\}$ and $\{l_1,\dots,l_r\}$ coincide as multisets.
\end{lem}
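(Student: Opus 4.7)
The plan is to use the Smith normal form of $M$ to present $H_{d-1}(\cX) = Z_{d-1}(\cX)/B_{d-1}(\cX)$ as a direct sum $\bigoplus_{j=1}^r K[\positivereal]/(z^{e_j})$ of cyclic modules, and then compare with the ungraded form of (\ref{eq:decomposition}) to conclude that $\{e_j\} = \{l_i\}$ as multisets.

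I would first choose bases $\{u_1,\dots,u_n\}$ of $C_{d-1}(\cX)$ and $\{v_1,\dots,v_m\}$ of $C_d(\cX)$ in which $M$ is in Smith normal form, so that $\delta_d(v_j) = z^{e_j} u_j$ for $j\le r$ and $\delta_d(v_j) = 0$ for $j>r$. Let $W = \sum_{j=1}^r K[\positivereal]\, u_j$. Since $B_{d-1}(\cX) \subset Z_{d-1}(\cX)$ and $K[\positivereal]$ is a domain, each $z^{e_j} u_j \in Z_{d-1}(\cX)$ forces $u_j \in Z_{d-1}(\cX)$ for $j\le r$, so $W \subset Z_{d-1}(\cX)$. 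The key intermediate step is to show $W = Z_{d-1}(\cX)$: the hypothesis $\beta_{d-1}(X^{(d)}) = 0$ makes $q = 0$ in (\ref{eq:decomposition}), whence $\rank Z_{d-1}(\cX) = \rank B_{d-1}(\cX) + q = r = \rank W$. Thus $Z_{d-1}(\cX)/W$ is a torsion module injecting into $C_{d-1}(\cX)/W$, which is free on $\{u_{r+1},\dots,u_n\}$, so it must vanish.

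With $W = Z_{d-1}(\cX)$ and $B_{d-1}(\cX) = \sum_{j=1}^r K[\positivereal]\,(z^{e_j} u_j)$, taking the quotient gives
\[
H_{d-1}(\cX) \;\simeq\; \bigoplus_{j=1}^{r} K[\positivereal]/(z^{e_j}).
\]
Comparing with the ungraded form of (\ref{eq:decomposition}), namely $H_{d-1}(\cX) \simeq \bigoplus_{i=1}^p K[\positivereal]/(z^{l_i})$ via $(z^{b_i})/(z^{d_i}) \simeq K[\positivereal]/(z^{l_i})$, and invoking uniqueness of invariant factors, the multisets $\{e_j\}_{j=1}^r$ and $\{l_i\}_{i=1}^r$ (with the padding convention $l_{p+1}=\dots=l_r=0$ matching units $z^{e_j}=1$) coincide. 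The main technical point is justifying Smith normal form over $K[\positivereal]$, which is not literally a PID; however, for a finite filtration only finitely many distinct exponents appear in $M$, so after rescaling one reduces to classical SNF over $K[z]$, the same reduction already implicit in the cited structure theorem for persistence modules.
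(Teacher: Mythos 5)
Your proposal is correct and follows essentially the same route as the paper: both rest on putting $M$ into Smith normal form over $K[\positivereal]$ and using $\delta_{d-1}\circ\delta_d=0$ (your observation that $z^{e_j}u_j\in Z_{d-1}(\cX)$ forces $u_j\in Z_{d-1}(\cX)$) together with $q=0$ from $\beta_{d-1}(X^{(d)})=0$. The only difference is cosmetic: you extract the conclusion by computing $H_{d-1}(\cX)\simeq\bigoplus_j K[\positivereal]/(z^{e_j})$ and invoking uniqueness of the decomposition, whereas the paper reads the birth times $t_{\tau_i}$ and death times $t_{\sigma_i}$ directly off the transformed bases.
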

\begin{proof}
Let us express the boundary maps
 $C_d(\cX)\stackrel{\delta_d}{\longrightarrow}C_{d-1}(\cX)\stackrel{\delta_{d-1}}{\longrightarrow}C_{d-2}(\cX)$ 
 in the matrix forms by using the standard bases $\Xi_d, \Xi_{d-1}$, and
 $\Xi_{d-2}$. Then, by performing appropriate base changes, $\delta_d$
 is expressed as a smith normal form 
\[
\delta_d=\left[
\begin{array}{c|c}
A & \vector{0}\\\hline
\vector{0} & \vector{0}
\end{array}
\right],\quad A=\diag(z^{e_1},\dots,z^{e_r}),
\]
where $e_i=t_{\sigma_i}-t_{\tau_i}$ is determined by the birth times of corresponding simplices $\sigma_i\in\Xi_d$ and $\tau_i\in\Xi_{d-1}$. We note that $t_{\sigma_i}, i=1,\dots,r,$ give the death times of the persistent homology $H_{d-1}(\cX)$. 
Furthermore, it follows from $\delta_{d-1}\circ\delta_d=0$ that the first $r$ columns of $\delta_{d-1}$ are now expressed  to be zero vectors. It means that $t_{\tau_i}, i=1,\dots,r,$ are the birth times of $H_{d-1}(\cX)$, and hence $e_i, i=1,\dots,r,$ coincide with the lifetimes of $H_{d-1}(\cX)$.
\end{proof}

\noindent
{\it Proof of Theorem \ref{thm:lifetime2_intro}.}
It follows from Lemma \ref{lem:sum_of_elementary_divisors} and \ref{lem:elementary_divisors_lifefimes} that
\[
L_{d-1}=l_1+\dots+l_r=e_1+\dots+e_r=\min_{K\in\cS^{(d-1)}_c}e(K).
\]
Note that the minimum of $e(K)$ is achieved by a minimum spanning acycle $L=X_{d-1}\setminus K\in S^{(d-1)}$. Thus, by combining with Proposition~\ref{prop:lifetime1}, we obtain Theorem \ref{thm:lifetime2_intro}.

\section{Simplicial Complex Process}\label{sec:scprocess}

\subsection{Random Persistence Diagram as Point Process}
First of all, we briefly recall the notion of point processes or random point fields. 
Let $S$ be a locally compact Polish space (locally compact separable
metrizable space) 
and $Q = Q(S)$ be the set of nonnegative integer valued Radon measures on $S$. 
Here, $\xi$ is called a Radon measure if $\xi$ is locally finite in the
sense that $\xi(K) < \infty$ 
whenever $K \subset S$ is compact.  Each element $\xi \in Q$ can be expressed as 
$\xi = \sum_s m_s \delta_s$, where $\delta_s$ is the delta measure at $s$ 
and $m_s \in \Z_{\geq 0}$ stands for multiplicity. 

Let $B_c(S)$ be the space of bounded measurable functions with compact support on $S$. For $f \in B_c(S)$,  we define a coupling of $\xi \in Q$ and $f \in B_c(S)$ by 
\[
\langle \xi, f \rangle = \int_S f(s) \xi(ds) = \sum_{s} m_s f(s). 
\]
In particular, when $f$ is the indicator function $I_A$ of a measurable set $A$, 
$\langle \xi, I_A \rangle = \xi(A)$ is the number of points in $A$ counting with multiplicity. 
A sequence $\{\xi_n\in Q\}_{n\ge 1}$ is said to converge to $\xi$
\textit{valuely} if $\langle \xi_n, f \rangle$ converges to $\langle \xi, f \rangle$ for 
any bounded continuous functions $f$ with compact support.
The space $Q$ is equipped with the topological $\sigma$-algebra 
$\mathcal{B}(Q)$ with respect to the vague topology. 
A $Q$-valued random variable on a probability space $(\Omega, \F, \P)$
is called a  
point process or a random point field. 

Given a point process on $S$, 
the expectation $\la(A) := \E \xi(A)$ for 
every Borel set $A$
defines a measure which may be finite or infinite. 
If it is also a Radon measure, $\la$ is said to be 
the mean measure or the intensity measure. 
In this case, we have 
\[
 \E [\langle \xi, f \rangle] = \int_S f(s) \la(ds)
\]
for $f \in B_c(S)$. 
We note that the mean measure does not necessarily belong to $Q$. 
Higher moment measures can also be defined.

Let $\cX = (X(t))_{t \in \positivereal}$ be 
an increasing stochastic process defined on a probability space
$(\Omega, \F, \P)$  
taking values in the set of simplicial complexes, i.e., a random 
filtration of a simplicial complex. 
As in Section \ref{sec:ph}, we assume that there is a finite saturation
time $T = T(\omega)$ such that $X(t) = X(T)$ for $t \ge T$ a.s.

As explained in Section~\ref{sec:ph_lifetime},
every filtration associates persistence diagrams on 
$\Delta=\{(x,y) \in \overline{\R}_{\geq 0}^2  \mid x \le y\}$.
Namely, a random filtration $\cX$ assigns a sequence of $Q$-valued random variables
\[
\Xi =
	\{\xi_{i}\in Q(\Delta)\mid i\in \Z_{\geq 0}\},
\]
where each $\xi_{i}$ is the $i$-th persistence diagram of $\cX$.  
In this case, the mean measure $\la_{i}$ on $\Delta$ turns out to be 
a Radon measure (indeed a totally finite measure), and we call it the $i$-th mean persistence 
diagram. Hence, we have
\[
 \E[\langle \xi_{i}, f \rangle] = \int_{\Delta} f(x,y)
 \la_{i}(dxdy) 
\]
for $f \in B_c(\Delta)$, and it also makes sense for nonnegative measurable functions.
In particular, this leads to
\[
 \E[L_i] = \int_{\Delta} (y-x) \lambda_{i}(dxdy)
\]
for the lifetime sum $L_i$ of the $i$-th persistent homology.

We consider two generalizations of the Erd\"os-R\'enyi graph process 
as random filtrations of simplicial complexes
and discuss the expectation of the lifetime sum.

\subsection{Linial-Meshulam Process}
We discuss a stochastic process $\{\K^{(d)}(t)\}_{0 \le t \le 1}$ 
studied in \cite{lm}. 
Let $\Delta_{n-1}$ be the $(n-1)$-dimensional maximal simplicial complex on the set $[n]=\{1,2,\dots,n\}$, and let $\Delta_{n-1}^{(d)}$ be its $d$-dimensional skeleton  ($1\leq d\leq n-1$).
Let $\{t_\sigma \mid \sigma \in (\Delta_{n-1})_d \}$ be i.i.d. random variables
uniformly distributed on $[0,1]$, where $(\Delta_{n-1})_d$ is the set of
all $d$-simplices in $\Delta_{n-1}$. We regard $t_\sigma$ as the birth time of the $d$-simplex $\sigma$. 
Let $\{\K^{(d)}(t)\}_{0 \le t \le 1}$ be an increasing 
stochastic process on simplicial complexes defined by 
\begin{align*}
 \K^{(d)}(0)&= \Delta_{n-1}^{(d-1)} , \\
 \K^{(d)}(t) &= \K^{(d)}(0) \sqcup \{\sigma \in (\Delta_{n-1})_d \mid t_\sigma \le t\}.
\end{align*}
The process starts from the $(d-1)$-dimensional skeleton $\Delta_{n-1}^{(d-1)}$ at
time $0$ and ends up with the $d$-dimensional skeleton $\Delta_{n-1}^{(d)}$ at time $1$, i.e.,
\[
\Delta_{n-1}^{(d-1)} = \K^{(d)}(0) \subset \K^{(d)}(t) 
\subset \K^{(d)}(1) = \Delta_{n-1}^{(d)}. 
\]
We call $\{\K^{(d)}(t)\}_{0 \le t \le 1}$ the $d$-Linial-Meshulam
process. 
In particular, the $1$-Linial-Meshulam process is nothing but 
the Erd\"os-R\'enyi graph process mentioned in Section \ref{sec:intro}. 

\begin{rem}{\rm 
Similar process is studied in \cite{HKP}, in which 
the birth times are i.i.d. exponential random variables with mean $1$ 
instead of uniform random variables. The advantage of their choice of 
random birth times is that the process becomes a continuous-time Markov process. 
}
\end{rem}

Let ${\beta}_k(t)$ denote the $k$-th Betti
number of $\K^{(d)}(t)$ at time $t$. 
Note that $\beta_k(t)=0$ for $k=0,1,\dots,d-2$.
We denote by $f_k(t) = f_k(\K^{(d)}(t))$ the number of $k$-simplices in $\K^{(d)}(t)$. 
Then, by applying the Euler-Poincar\'e formula to 
the $d$-Linial-Meshulam process, 
we have
\begin{equation}
{\beta}_d(t) - {\beta}_{d-1}(t) =  f_d(t) -  {n-1 \choose d}.
\label{lmd} 
\end{equation}
We also note that there exist random times 
$\tau_{d-1}, T_d \in [0,1]$ with $\tau_{d-1}\leq T_d$
such that 
\begin{align*}
&{\beta}_{d-1}(0) = {n-1 \choose d}, \quad
{\beta}_{d-1}(t) =0 \text{ for $t \ge \tau_{d-1}$}, \\
&{\beta}_{d}(0) = 0, \quad 
{\beta}_{d}(t) = {n-1 \choose d+1} \ \text{ for $t \ge T_d$}.
\end{align*}
The Betti numbers $\beta_{d-1}(t)$ and $\beta_d(t)$ are non-increasing and non-decreasing in $t$, respectively.

\subsection{Clique Complex Process}

The clique complex ${\rm Cl}(G)$ associated with a graph $G$ 
is the maximal simplicial complex having $G$ as the $1$-dimensional
skeleton. In other words, the simplices in ${\rm Cl}(G)$ consist of all
complete subgraphs in $G$.  
We define a clique complex process associated with the Erd\"os-R\'enyi graph process 
on $n$ vertices by 
\[
\CC(t) = {\rm Cl}(\K^{(1)}(t)), \quad 0 \le t \le 1, 
\]
where $\K^{(1)}(t)$ is the one defined in the previous subsection. 
The process starts from the $0$-skeleton, i.e., $n$ isolated vertices, and 
ends up with $\Delta_{n-1}$. Namely, 
\[
\Delta_{n-1}^{(0)} = \CC(0) \subset \CC(t)
\subset \CC(1) = \Delta_{n-1}. 
\]

By definition, for each edge $e$ in $\CC(t)$ (or equivalently
$\K^{(1)}(t)$), a uniform random variable $t_e \in
[0,1]$ is independently assigned as its birth time, and 
the birth time of a simplex $\sigma$ with $|\sigma| \ge 2$ is given by 
\[
 t_\sigma = \max\{t_e \mid e \subset \sigma,~|e|=2\}. 
\]
We remark that $t_v = 0$ for each vertex $v \in [n]$. 

Since a simplex $\sigma$ contains ${|\sigma| \choose 2}$ edges and $t_\sigma$ is the
maximum of the ordered statistics of i.i.d. ${|\sigma| \choose 2}$ uniform
random variables, we have 
\[
\E[t_\sigma] = \frac{{|\sigma| \choose 2}}{{|\sigma| \choose 2}+1}. 
\]
Here, we used the following well-known fact. Let $y_i, i=1,\dots, N,$ be 
i.i.d. uniform random variables on $[0,1]$ and $Y_i, i=1,\dots, N,$ be the rearrangement of $y_i$ in increasing order. 
Then, for each $i=1,\dots, N$, 
\begin{equation}
 \E[Y_i] = \frac{i}{N+1}. 
\label{eq:ordered}
\end{equation}


We remark that the Betti numbers in $\{\CC(t)\}_{0\le t\le 1}$
are not monotone in $t$, although they are in $\{\K^{(d)}(t)\}_{0\le t \le 1}$.

\section{Expectation of Lifetime Sum}\label{sec:main}
In this section, we first prove Theorem \ref{thm:main}. Then, we 
show a partial result on the expectation of the lifetime sum
in the clique complex process. 
We note that, since both processes $\{\K^{(d)}(t)\}_{0\le t \le 1}$ and $\{\CC(t)\}_{0\le t\le 1}$ are defined on the interval $[0,1]$, the lifetime formula (\ref{eq:int_intro2}) is given as
\begin{equation}\label{eq:int_proof}
L_{d-1}=\int_0^1\beta_{d-1}(t)dt.
\end{equation}

\subsection{Proof of Theorem \ref{thm:main}}
For $d\geq 1$, let $\cnd$ be the set of 
$d$-dimensional simplicial complexes on $n$ vertices with the $(d-1)$-complete skeleton 
$\Delta_{n-1}^{(d-1)}$. 
For $Y \in \cnd$, let us define
\begin{align*}
 \reduce(Y) &= \{\sigma \in (\Delta_{n-1})_d \mid {\beta}_{d-1}(Y \cup \sigma) =
 {\beta}_{d-1}(Y) - 1\}, \\
 \shadow(Y) &= \{\sigma \in (\Delta_{n-1})_d 
\mid {\beta}_{d-1}(Y \cup \sigma) = {\beta}_{d-1}(Y)\}. 
\end{align*}
We note that 
\begin{enumerate}
 \item $Y_d \subset \shadow(Y)$,
 \item $(\Delta_{n-1})_{d} = \reduce(Y) \sqcup \shadow(Y)$ for 
$Y \in \cnd$, and 
 \item $\sigma \in \shadow(Y)$ is equivalent that the boundary of
       $\sigma$ is contained in $\image \partial_{Y,d}$,
\end{enumerate}
where $\partial_{Y,d}$ is the $d$-th boundary map for $Y$.

The set $\shadow(Y) \setminus Y_d$ is called the shadow of $Y$ in \cite{LNPR}. 
It should be noted that $\reduce$ and $\shadow$ are monotone decreasing
and increasing, respectively, i.e.,
\[
\reduce(Y) \supset \reduce(Y'), \ \shadow(Y) \subset \shadow(Y')
\]
for $Y, Y' \in \cnd$ with $Y \subset Y'$.
For $Y \in \cnd$, we define the hull of $Y$ by 
$\overline{Y}:=Y \cup \shadow(Y)$. By definition, it is clear that 
\begin{equation}
{\beta}_{d-1}(\overline{Y}) = 
{\beta}_{d-1}(Y). 
\label{betahull}
\end{equation}

Now we use a Kruskal-Katona-type result obtained in \cite{LNPR}. 
Here, we restate their result as to be fitted in our situation. 
\begin{prop}[\cite{LNPR}, Corollary 6.6]
Let $Y$ be a $d$-dimensional simplicial complex with 
$|Y_d| = {x \choose d+1}$, where $x \ge d+1$ is a real. 
Then, $\rank \partial_{Y,d} \ge \frac{d+1}{x} |Y_d|$. 
In particular, for any $d$-dimensional simplicial complex $Y$ defined on $n$-vertices, 
\begin{equation}
\rank \partial_{Y,d} \ge \frac{d+1}{n} |Y_d|.  
\label{rank-cardinality}
\end{equation}
\end{prop}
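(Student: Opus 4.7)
First, I observe that the second (``in particular'') assertion is an immediate consequence of the first. For any $d$-dimensional complex $Y$ on $n$ vertices one has $|Y_d|\le\binom{n}{d+1}$; the real number $x\ge d+1$ defined by $|Y_d|=\binom{x}{d+1}$ is unique by strict monotonicity of $\binom{\,\cdot\,}{d+1}$ on $[d,\infty)$, and this same monotonicity forces $x\le n$. Consequently $\tfrac{d+1}{x}\ge\tfrac{d+1}{n}$, and the second bound drops out of the first. So it suffices to establish the first inequality.

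To prove it, the plan is to pass to the symmetric algebraic shift $\Delta Y$ of $Y$ in the sense of Kalai. Two standard facts are invoked. First, shifting preserves the $f$-vector, so $|(\Delta Y)_d|=|Y_d|$ and $\Delta Y$ is still purely $d$-dimensional on (at most) $n$ vertices. Second, since $Y$ is purely $d$-dimensional one has $B_d(Y)=B_d(\Delta Y)=0$, and since algebraic shifting does not decrease the top Betti number, $\dim Z_d(\Delta Y)\ge \dim Z_d(Y)$, equivalently $\rank\partial_{\Delta Y,d}\le\rank\partial_{Y,d}$. Hence it is enough to prove the inequality for the shifted complex $\Delta Y$. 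For shifted complexes the Bj\"orner--Kalai/Duval combinatorics identifies the family of ``cycle-indexing'' $d$-simplices as an explicit shifted subfamily of $(\Delta Y)_d$ whose cardinality equals $\dim Z_d(\Delta Y)$.

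Writing $m=|Y_d|=\binom{x}{d+1}$ and applying the Kruskal--Katona theorem to this cycle-indexing family, one arrives at $\dim Z_d(\Delta Y)\le \binom{x-1}{d+1}$, whence
\[
\rank\partial_{\Delta Y,d}\ge m-\binom{x-1}{d+1}=\binom{x}{d+1}-\binom{x-1}{d+1}=\binom{x-1}{d}=\tfrac{d+1}{x}\,m,
\]
which is the desired inequality. The main obstacle is the last step: one needs the precise identification of cycle-indexing simplices in a shifted complex, together with the verification that this family is sufficiently ``initial'' in colex order for Kruskal--Katona to deliver the clean bound $\binom{x-1}{d+1}$. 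This structural analysis, carried out in LNPR's Section~6, is the technical heart of the argument; the remaining pieces (the monotonicity reduction and the shifting invariance of the top Betti number) are routine once the shifted case is in hand.
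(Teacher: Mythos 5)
The paper offers no proof of this proposition: it is imported verbatim (merely restated) from [LNPR, Corollary 6.6], so there is no internal argument to compare yours against. Your reduction of the ``in particular'' clause to the main inequality is correct and complete: $|Y_d|\le\binom{n}{d+1}$ and the strict monotonicity of $t\mapsto\binom{t}{d+1}$ on $[d,\infty)$ force $x\le n$. Your outline of the main inequality is also consistent with how such Kruskal--Katona-type bounds are proved (and with the route in LNPR). Two small remarks on that outline. First, the shifting step is even cleaner than you state: exterior/symmetric algebraic shifting preserves all Betti numbers (Bj\"orner--Kalai), and in top dimension $B_d=0$, so $\dim Z_d(\Delta Y)=\beta_d(\Delta Y)=\beta_d(Y)$ and the ranks agree exactly; moreover for a shifted complex $\beta_d=|\mathcal F_0|$ with $\mathcal F_0=\{F\in(\Delta Y)_d: 1\notin F\}$. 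Second, no verification that the cycle-indexing family is ``initial in colex'' is needed: shiftedness alone gives $\partial\mathcal F_0\subseteq\{F\setminus\{1\}: 1\in F\in(\Delta Y)_d\}$, and Kruskal--Katona in Lov\'asz's form applied to the arbitrary family $\mathcal F_0$ yields $\binom{x}{d+1}\ge\binom{y}{d+1}+\binom{y}{d}=\binom{y+1}{d+1}$ where $|\mathcal F_0|=\binom{y}{d+1}$, hence $\dim Z_d\le\binom{x-1}{d+1}$ and $\rank\partial_{Y,d}\ge\binom{x-1}{d}=\frac{d+1}{x}\binom{x}{d+1}$. So the pieces you supply are correct, and the piece you defer to LNPR is precisely the piece the paper also defers; as a standalone proof it is an outline rather than a complete argument, but it faithfully reflects the cited source.
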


\begin{cor}\label{cor:beta-cardinality}
For $Y \in \cnd$, 
\begin{equation}
 {\beta}_{d-1}(Y) \le \frac{d+1}{n} |\reduce(Y)|. 
\label{beta-cardinality}
\end{equation}
\end{cor}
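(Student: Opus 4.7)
The plan is to reduce to the proposition by passing to the \emph{hull} $\overline{Y} = Y \cup \shadow(Y)$ and exploiting that the hull has the same $(d{-}1)$-st Betti number as $Y$ while its $d$-simplices are the entire complement of $\reduce(Y)$. The point is that $\overline{Y}$ is the ``largest'' element of $\cnd$ that is indistinguishable from $Y$ in the relevant homological data, so applying the Kruskal--Katona-type bound to $\overline{Y}$ yields an inequality that involves $|\reduce(Y)|$ rather than $|Y_d|$.

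I would first record the numerics of $\overline{Y}$. By the partition $(\Delta_{n-1})_d = \reduce(Y) \sqcup \shadow(Y)$ and the fact that $\overline{Y}_d = \shadow(Y)$, we obtain
\[
|\overline{Y}_d| = \binom{n}{d+1} - |\reduce(Y)|.
\]
Since $\overline{Y}^{(d-1)} = \Delta_{n-1}^{(d-1)}$ is the complete $(d{-}1)$-skeleton, $\dim \kernel \partial_{\overline{Y},d-1} = \beta_{d-1}(\Delta_{n-1}^{(d-1)}) = \binom{n-1}{d}$ (as used at the start of Section~\ref{sec:scprocess}), so
\[
\beta_{d-1}(\overline{Y}) = \binom{n-1}{d} - \rank \partial_{\overline{Y},d}.
\]
Combining this with (\ref{betahull}) expresses $\beta_{d-1}(Y)$ in terms of $\rank \partial_{\overline{Y},d}$.

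Next I would apply the proposition (inequality (\ref{rank-cardinality})) to $\overline{Y}$, obtaining $\rank \partial_{\overline{Y},d} \ge \frac{d+1}{n}\,|\overline{Y}_d|$. Plugging in the expression for $|\overline{Y}_d|$ and using the elementary identity $\frac{d+1}{n}\binom{n}{d+1} = \binom{n-1}{d}$, the two $\binom{n-1}{d}$ terms cancel, leaving precisely $\beta_{d-1}(Y) \le \frac{d+1}{n}|\reduce(Y)|$.

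I do not expect a real obstacle here, since the proposition from \cite{LNPR} does all the heavy lifting; the only subtlety is verifying the two structural facts about the hull, namely $\beta_{d-1}(\overline{Y}) = \beta_{d-1}(Y)$ (already granted by (\ref{betahull})) and $\overline{Y}_d = \shadow(Y)$, which follows immediately from the definition of $\overline{Y}$ together with $Y_d \subset \shadow(Y)$.
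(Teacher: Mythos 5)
Your proposal is correct and is essentially identical to the paper's own proof: both pass to the hull $\overline{Y}$, invoke (\ref{betahull}) and the Kruskal--Katona-type bound (\ref{rank-cardinality}) applied to $\overline{Y}$, and finish with the count $|\overline{Y}_d| = \binom{n}{d+1} - |\reduce(Y)|$ together with the identity $\frac{d+1}{n}\binom{n}{d+1} = \binom{n-1}{d}$. The only difference is that you spell out why $\beta_{d-1}(\overline{Y}) = \binom{n-1}{d} - \rank\partial_{\overline{Y},d}$, which the paper leaves implicit.
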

\begin{proof} 
By \eqref{betahull} and \eqref{rank-cardinality}, 
\begin{align*}
 {\beta}_{d-1}(Y) 
= {\beta}_{d-1}(\overline{Y}) 
= {n-1 \choose d} - \rank \partial_{\overline{Y},d} 
\le {n-1 \choose d} - \frac{d+1}{n} |\overline{Y}_d|. 
\end{align*} 
Since $|\overline{Y}_d| = |\shadow(Y)| = {n \choose d+1} -
 |\reduce(Y)|$, 
we have the desired inequality. 
\end{proof}


In what follows, we will use the symbol $N$ for ${n \choose d+1}$ in this subsection. 
Let us set $\cnmd=\{Y \in \cnd\mid |Y_d| =m\}$. Then, we have a decomposition
\[
\cnd = \bigcup_{m=1}^{N} \cnmd. 
\]
Let $Y^{(d)}(n,m)$ be the uniform distribution on $\cnmd$. 
We use the notation $Y \sim Y^{(d)}(n,m)$ to mean that
$Y$ is chosen according to the distribution $Y^{(d)}(n,m)$.

For two random simplicial complexes $X$ and $Y$ taking values in $\cnd$, 
we say that $Y$ stochastically dominates $X$, denoted by $X \subset_{st} Y$, if 
there exists a coupling of $X_d$ and $Y_d$ such that $X_d \subset Y_d$ a.s. 


\begin{lem}\label{lem:coupling}
Let $k, m \in \n$ with $km \le N$. 
Suppose that $Y_1,\dots, Y_k \sim Y^{(d)}(n,m)$ are i.i.d. random simplicial 
 complexes and $Y \sim Y^{(d)}(n,km)$. Then, 
$\cup_{i=1}^k Y_i  \subset_{st} Y$. 
\end{lem}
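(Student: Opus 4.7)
The plan is to build an explicit coupling in which $\bigcup_i Y_i \subset \tilde Y$ almost surely for some $\tilde Y \sim Y^{(d)}(n, km)$. The idea is to augment the random union $\bigcup_i Y_i$ by adding uniformly chosen ``filler'' $d$-simplices to reach the target size $km$, and to verify that this augmentation produces the uniform distribution.

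Concretely, write $N = \binom{n}{d+1} = |(\Delta_{n-1})_d|$; the hypothesis $km \le N$ ensures there is enough room. Sample $Y_1,\dots,Y_k$ independently from $Y^{(d)}(n,m)$, set $U = \bigcup_{i=1}^k (Y_i)_d \subset (\Delta_{n-1})_d$, and, conditional on $U$, let $R$ be uniformly distributed on the subsets of $(\Delta_{n-1})_d \setminus U$ of size $km - |U|$. Define $\tilde Y$ to be the element of $\mathcal{C}_n^{(d)}$ whose $d$-simplex set is $U \cup R$. By construction $\bigcup_i Y_i \subset \tilde Y$ a.s. (they share the same complete $(d-1)$-skeleton and the $d$-simplex sets are nested), so the lemma reduces to showing that $\tilde Y \sim Y^{(d)}(n, km)$.

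To verify this, fix any $T \subset (\Delta_{n-1})_d$ with $|T| = km$. Conditioning on $U$ and using the uniform law of $R$,
\[
\P(\tilde Y_d = T) \;=\; \sum_{U_0 \subset T} \P\bigl(\textstyle\bigcup_i (Y_i)_d = U_0\bigr)\,\binom{N-|U_0|}{km-|U_0|}^{-1}.
\]
The key observation is that the law of $\bigcup_i (Y_i)_d$ is invariant under the action of any permutation $\pi$ of $(\Delta_{n-1})_d$: each $(Y_i)_d$ is uniform on $m$-subsets of $(\Delta_{n-1})_d$, so the joint law of $(\pi(Y_1)_d,\dots,\pi(Y_k)_d)$ matches that of $((Y_1)_d,\dots,(Y_k)_d)$, and hence $\pi(\bigcup_i (Y_i)_d)$ is equidistributed with $\bigcup_i (Y_i)_d$. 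Therefore $\P(\bigcup_i (Y_i)_d = U_0) = f(|U_0|)$ depends only on $|U_0|$, and grouping the sum by $j = |U_0|$ gives
\[
\P(\tilde Y_d = T) \;=\; \sum_{j=m}^{km} \binom{km}{j}\, f(j)\,\binom{N-j}{km-j}^{-1},
\]
which is independent of $T$. Hence $\tilde Y_d$ is uniform on $km$-subsets of $(\Delta_{n-1})_d$, i.e.\ $\tilde Y \sim Y^{(d)}(n, km)$, completing the coupling.

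The only subtle step is recognizing that filling in an exchangeable random subset with a uniform random remainder of the prescribed size produces a uniform subset; this follows directly from the permutation symmetry of the law of $\bigcup_i(Y_i)_d$. Everything else is an elementary conditioning calculation, so there is no serious obstacle.
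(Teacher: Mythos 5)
Your coupling is exactly the one the paper uses: augment $\bigcup_{i=1}^k (Y_i)_d$ by a uniformly chosen set of filler $d$-simplices to reach size $km$, and observe that the result is uniform on $km$-subsets. The paper simply asserts this last fact (``it is easy to see''), whereas you supply the exchangeability computation justifying it; the argument is correct and essentially identical.
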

\begin{proof}
For given $Y_1,\dots, Y_k$, we define a collection of subsets of $d$-simplices by 
\[
 \a_{Y_1,\dots, Y_k} := \{F\subset (\Delta_{n-1})_d \mid 
F \supset \cup_{i=1}^k (Y_i)_d, \ |F| = km\}. 
\]
We sample $F$ from $\a_{Y_1,\dots, Y_k}$ uniformly at random 
and set $Y = \Delta_{n-1}^{(d-1)} \sqcup F$. 
Then, it is easy to see that the law of $Y$ is equal to $Y^{(d)}(n,km)$, and hence
$\cup_{i=1}^k Y_i  \subset_{st} Y$. 
\end{proof}

For $Z \sim Y^{(d)}(n,m)$, we set $\rho_{n,m} = \P(\sigma \in \reduce(Z))$.
By symmetry, the probability $\rho_{n,m}$ does not 
depend on the choice of $\sigma \in (\Delta_{n-1})_d$, and 
$\E |\reduce(Z)| = N \rho_{n,m}$.
Note that $\rho_{n,m}$ is decreasing in $m$. 

\begin{lem}
Let $k, m \in \n$ with $km \le N$ and $Y \sim Y^{(d)}(n,km)$. 
Then, 
\[
\E|\reduce(Y)| \le N \rho_{n,m}^k. 
\] 
\end{lem}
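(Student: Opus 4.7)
The plan is to combine the coupling supplied by Lemma~\ref{lem:coupling} with the monotonicity of $\reduce$ noted just before the lemma, and then to take expectation exploiting the independence of the $Y_i$'s.

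More precisely, I would realize $Y_1, \dots, Y_k \sim Y^{(d)}(n,m)$ i.i.d.\ and $Y \sim Y^{(d)}(n, km)$ on a common probability space so that $\bigcup_{i=1}^k Y_i \subset Y$ almost surely, as in the proof of Lemma~\ref{lem:coupling}. Crucially, the construction there samples the $Y_i$ first as i.i.d.\ copies and then augments their union to a uniform $km$-subset, so the $Y_i$ remain mutually independent under the coupling. Since $\reduce$ is monotone decreasing in the simplicial complex, for each $i$ we have $\reduce(Y) \subset \reduce\bigl(\bigcup_j Y_j\bigr) \subset \reduce(Y_i)$, hence
\[
\reduce(Y) \subset \bigcap_{i=1}^k \reduce(Y_i) \qquad \text{a.s.}
\]

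Taking cardinalities and then expectations, and using linearity together with independence of the $Y_i$'s,
\[
\E|\reduce(Y)| \le \E\Bigl|\bigcap_{i=1}^k \reduce(Y_i)\Bigr| = \sum_{\sigma \in (\Delta_{n-1})_d} \prod_{i=1}^k \P(\sigma \in \reduce(Y_i)).
\]
By symmetry of $Y^{(d)}(n,m)$ under permutations of vertices, $\P(\sigma \in \reduce(Y_i)) = \rho_{n,m}$ does not depend on the choice of $\sigma$, so the right-hand side equals $N \rho_{n,m}^k$, which is the desired bound.

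There is no serious obstacle: the only point that requires a moment of care is checking that the coupling preserves the independence of the $Y_i$'s (otherwise the product factorization would fail), but this is immediate from the explicit construction in Lemma~\ref{lem:coupling}. The monotonicity of $\reduce$ is already recorded in the text, and the symmetry identification of $\P(\sigma \in \reduce(Y_i))$ with $\rho_{n,m}$ is essentially by definition.
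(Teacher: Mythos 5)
Your proposal is correct and follows essentially the same route as the paper: coupling via Lemma~\ref{lem:coupling}, monotonicity of $\reduce$, independence of the $Y_i$ under the coupling, and symmetry to identify $\P(\sigma\in\reduce(Y_i))$ with $\rho_{n,m}$. The only cosmetic difference is that you sum $\prod_i\P(\sigma\in\reduce(Y_i))$ over $\sigma$ by linearity of expectation, whereas the paper bounds $\P(\sigma\in\reduce(Y))$ pointwise first and then invokes symmetry; these are the same argument.
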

\begin{proof} 
Suppose $Y_1,\dots,Y_k \sim Y^{(d)}(n,m)$ are i.i.d. random simplicial complexes. 
From \lref{lem:coupling}, we have a coupling such that $Y_i \subset \cup_{i=1}^k Y_i \subset Y$ 
for every $i =1,2,\dots, k$ by symmetry. 
Since $\reduce$ is monotone decreasing, we obtain 
\[
 \reduce(Y) 
\subset \reduce(\cup_{i=1}^k Y_i)
\subset \cap_{i=1}^k \reduce(Y_i).
\]
This implies 
\begin{align*}
\P(\sigma \in \reduce(Y)) 
&\le \P(\cap_{i=1}^k \{\sigma \in \reduce(Y_i)\}) \\
&= \P(\sigma \in \reduce(Y_1))^k \\ 
&=\rho_{n,m}^k.  
\end{align*}
Therefore, again by symmetry, we obtain $\E|\reduce(Y)| \le N \rho_{n,m}^k$.  
\end{proof}

\begin{prop}\label{prop:bettiestimate} 
Let $\{Y_t=\cK^{(d)}(t)\}_{0 \le t \le 1}$ be the $d$-Linial-Meshulam process on $n$
 vertices. 
Then, for any $m \le N$,  
\[
 \int_0^1 \E |\reduce(Y_t)| dt \le \frac{m}{1-\rho_{n,m}}. 
\]
\end{prop}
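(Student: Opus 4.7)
The plan is to decompose the integral via the ordered arrival times of the $d$-simplices and then exploit the stochastic-domination lemma together with a geometric summation.

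First, let $T_1 < T_2 < \cdots < T_N$ denote the order statistics of the $N = \binom{n}{d+1}$ i.i.d.\ uniform birth times $\{t_\sigma\}$, and write $T_0 = 0$. For $t \in [T_i, T_{i+1})$ the complex $Y_t$ equals the random complex $Y^{(i)} := \Delta_{n-1}^{(d-1)} \sqcup \{\sigma_{(1)},\dots,\sigma_{(i)}\}$, where $\sigma_{(j)}$ is the $d$-simplex arriving at time $T_j$. A key observation is that, because the indexing of i.i.d.\ variables by rank is independent of the values of the order statistics, $Y^{(i)}$ is distributed uniformly on $\mathcal{C}^{(d)}_{n,i}$ and is independent of the vector $(T_1,\ldots,T_N)$. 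Therefore
\[
\int_0^1 \E|\reduce(Y_t)|\, dt
= \sum_{i=0}^{N-1} \E|\reduce(Y^{(i)})|\cdot \E[T_{i+1}-T_i]
= \frac{1}{N+1}\sum_{i=0}^{N-1} \E|\reduce(Y^{(i)})|,
\]
using \eqref{eq:ordered} to get $\E[T_{i+1}-T_i] = 1/(N+1)$.

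Next I would translate $\E|\reduce(Y^{(i)})|$ into the quantity $\rho_{n,i}$. Since $Y^{(i)} \sim Y^{(d)}(n,i)$, symmetry over $(\Delta_{n-1})_d$ yields $\E|\reduce(Y^{(i)})| = N\rho_{n,i}$. I would then record the following monotonicity: for $i \le i'$ there is a coupling $Y \subset Y'$ with $Y \sim Y^{(d)}(n,i)$, $Y' \sim Y^{(d)}(n,i')$, so that $\reduce(Y) \supset \reduce(Y')$ by the monotonicity of $\reduce$, and hence $\rho_{n,i'} \le \rho_{n,i}$.

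Finally I would invoke the preceding lemma, which gives $\rho_{n,km} \le \rho_{n,m}^{k}$ for every $k \ge 0$ with $km \le N$, and group the sum in blocks of length $m$:
\[
\sum_{i=0}^{N-1} \rho_{n,i}
\le \sum_{k=0}^{\lceil N/m\rceil - 1} \sum_{j=0}^{m-1} \rho_{n,km+j}
\le \sum_{k=0}^{\infty} m\,\rho_{n,m}^{k}
= \frac{m}{1-\rho_{n,m}}.
\]
Combining with the factor $N/(N+1) \le 1$ from the first display yields the claimed bound. There is no serious obstacle; the only mildly subtle point is justifying the independence of $Y^{(i)}$ and the order statistics $(T_i)$, which is the standard fact that the rank order and value order of i.i.d.\ continuous samples are independent.
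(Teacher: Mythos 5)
Your argument is correct and is essentially the paper's proof in a dual bookkeeping: the paper conditions on $|(Y_t)_d|\sim \mathrm{Bin}(N,t)$ and integrates $\binom{N}{\ell}t^{\ell}(1-t)^{N-\ell}$ to get the weight $\tfrac{1}{N+1}$ per cardinality, whereas you obtain the same weight via the expected gaps of the order statistics; the subsequent block decomposition, monotonicity of $\rho_{n,\cdot}$, the bound $\rho_{n,km}\le\rho_{n,m}^{k}$, and the geometric summation are identical. The only cosmetic omission is the final interval $[T_N,1]$, whose contribution vanishes since $\reduce$ of the full $d$-skeleton is empty.
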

\begin{proof}
For fixed $m \in \n$, we see that 
\begin{align*}
\E |\reduce(Y_t)| 
&= \sum_{k=0}^{\lfloor N/m\rfloor} \sum_{\ell=km}^{(k+1)m-1} 
\E[|\reduce(Y_t)| \ | \ |(Y_t)_d|=\ell] \cdot \P(|(Y_t)_d| = \ell) \\
&= \sum_{k=0}^{\lfloor N/m\rfloor} \sum_{\ell=km}^{(k+1)m-1} 
\E |\reduce(Y^{(d)}(n,\ell))| \cdot \P(|(Y_t)_d| = \ell) \\
&\le \sum_{k=0}^{\lfloor N/m\rfloor} \sum_{\ell=km}^{(k+1)m-1} 
\E |\reduce(Y^{(d)}(n, km))| \cdot \P(|(Y_t)_d| = \ell) \\
&\le \sum_{k=0}^{\lfloor N/m\rfloor} N \rho_{n,m}^k \sum_{\ell=km}^{(k+1)m-1} 
\P(|(Y_t)_d| = \ell).
\end{align*}
Here $\reduce(Y^{(d)}(n, \ell))$ means 
$\reduce(Y)$ for $Y \sim Y^{(d)}(n, \ell)$. 
Since $|(Y_t)_d| \sim Bin(N, t)$, we have 
\[
 \int_0^1 \P(|(Y_t)_d| = \ell) dt 
= \int_0^1 {N \choose \ell} t^\ell (1-t)^{N-\ell} dt 
= \frac{1}{N+1}.
\]
Therefore, 
\[
\int_0^1 \E |\reduce(Y_t)| dt 
\le \sum_{k=0}^{\lfloor N/m\rfloor} N \rho_{n,m}^k \frac{m}{N+1} 
\le \frac{m}{1-\rho_{n,m}}. 
\]
\end{proof}

Now, we appropriately choose $m$ in Proposition \ref{prop:bettiestimate}.  
For $0 < c < 1$, let us define
\begin{equation}
 m_c(n) := \min\left\{ m \le N~\Big{|}~ \rho_{n,m} \le c
 \right\}. 
\label{eq:mcn}
\end{equation}
Then, Hoffman-Kahle-Paquette showed the following result.
\begin{lem}[\cite{HKP2}, Lemma 10]\label{lem:hkp}
$m_{1/2}(n) \le 4 {n \choose d}$. 
\end{lem}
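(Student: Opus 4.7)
The target is to show $\rho_{n,m}\le 1/2$ for $m=4\binom{n}{d}$. Since the uniform model $Y^{(d)}(n,m)$ is $S_n$-invariant, $\rho_{n,m}=\E|\reduce(Z)|/N$ with $N=\binom{n}{d+1}$, so the target is equivalent to $\E|\reduce(Z)|\le N/2$ for $Z\sim Y^{(d)}(n,m)$.

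My first move is to reformulate $\reduce$ in matroid-theoretic terms. The $d$-simplices of $\Delta_{n-1}$ form the ground set of a matroid whose independent sets are the subsets $S$ with $\{\partial\tau:\tau\in S\}$ linearly independent in $C_{d-1}$; its rank equals $r=\gamma_d(\Delta_{n-1})=\binom{n-1}{d}$ by Example~\ref{ex:cardinality}. A direct check shows that $\sigma\in\reduce(Z)$ iff $\sigma\notin Z_d$ and $\partial\sigma\notin\image\partial_{Z,d}$, i.e., $\sigma$ lies outside the matroid closure $\cl(Z_d)$; equivalently, $|\reduce(Z)|=N-|\cl(Z_d)|$. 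Since $N/r=n/(d+1)$, the threshold $m=4\binom{n}{d}\approx 4r$ is precisely the regime where a uniform random $m$-subset should span most of the matroid, mirroring the $d=1$ fact that $4n$ random edges of $K_n$ produce a giant connected component covering all but a $o(1)$ fraction of the vertices. The objective becomes $\E|\cl(Z_d)|\ge N/2$.

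By symmetry it suffices to show $\P(\sigma_0\in\cl(Z_d))\ge 1/2$ for a fixed $\sigma_0\in(\Delta_{n-1})_d$. A tempting first attempt is a union bound over the $n-d-1$ events $A_v$, one for each $v\notin\sigma_0$, that the $d+1$ non-$\sigma_0$ faces of the $(d+1)$-simplex $\sigma_0\cup\{v\}$ all lie in $Z_d$; on $A_v$ the relation $\partial\circ\partial=0$ forces $\partial\sigma_0\in\image\partial_{Z,d}$. However, a hypergeometric computation gives $\P(A_v)\approx(4(d+1)/n)^{d+1}$, so $\sum_v\P(A_v)=O(n^{-d})\to 0$, which is far too small. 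One must therefore account for coverage by longer $d$-cycles supported in $Z_d\cup\{\sigma_0\}$, the higher-dimensional analogue of multi-edge paths in the graph case.

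The main obstacle is precisely this last step: a quantitative higher-dimensional percolation statement at density $m=4\binom{n}{d}$. Two possible routes are: (a) a link-vanishing argument in the style of Meshulam--Wallach, using that the link of each $(d-1)$-face in $Z$ is distributed as a random graph with $\Theta(n)$ edges on $n-d$ vertices and is sufficiently well-connected with high probability to propagate coverage of $\cl(Z_d)$; or (b) a spectral bound on the restriction of the boundary operator to $Z_d$, giving $\E\rank\partial_{Z,d}\ge r/2$, then converting this rank lower bound into the size lower bound $\E|\cl(Z_d)|\ge N/2$ via the $S_n$-symmetry of the simplicial matroid (each new independent direction in $\image\partial_{Z,d}$ absorbs, on average, $N/r$ simplices into $\cl(Z_d)$). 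Either path requires significant technical work and is the content of Lemma~10 in \cite{HKP2}.
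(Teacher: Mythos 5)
Your reduction of the lemma to the statement $\E|\reduce(Z)|\le N/2$ for $Z\sim Y^{(d)}(n,m)$ with $m=4\binom{n}{d}$, and the identification of $\reduce(Z)$ with the complement of the matroid closure of $Z_d$ in the simplicial matroid, are both correct, and you are right that the union bound over cones $\sigma_0\cup\{v\}$ is hopeless. But at that point the argument stops: the two routes you name (a Meshulam--Wallach-style link argument, a spectral bound forcing $\E\,\rank\partial_{Z,d}\ge r/2$) are only gestured at, and you explicitly defer the quantitative estimate to Lemma~10 of \cite{HKP2}. Since that estimate \emph{is} the statement to be proved, the proposal is a (correct) reformulation followed by a citation, not a proof; the essential step --- showing that $4\binom{n}{d}$ uniformly random $d$-faces span at least half of the simplicial matroid in expectation --- is missing. (For comparison: the paper itself also imports this lemma from \cite{HKP2} without proof, so there is no internal argument in the text to measure against.)

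The missing step has an elementary proof that requires none of the percolation or spectral machinery you invoke, which is why treating it as a deep black box is the wrong instinct here. Order the $N=\binom{n}{d+1}$ faces of $(\Delta_{n-1})_d$ uniformly at random and add them one at a time, so that after $m$ steps the complex is distributed as $Y^{(d)}(n,m)$. Each added face either leaves $\beta_{d-1}$ unchanged or decreases it by exactly $1$ (rank--nullity for one new column of $\partial_d$), and since $\beta_{d-1}$ starts at $\binom{n-1}{d}$ and ends at $0$, \emph{exactly} $\binom{n-1}{d}$ of the $N$ steps are reducing, deterministically. On the other hand, the probability that step $m+1$ is reducing equals $\E|\reduce(Y_m)|/(N-m)\ge \rho_{n,m}$, using $\reduce(Y)\cap Y_d=\emptyset$ and $\E|\reduce(Y_m)|=N\rho_{n,m}$. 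Summing over the first $M$ steps and using that $\rho_{n,m}$ is decreasing in $m$ (as the paper notes after Lemma~\ref{lem:coupling}) gives $M\rho_{n,M-1}\le\sum_{m=0}^{M-1}\rho_{n,m}\le\binom{n-1}{d}$. Taking $M=2\binom{n-1}{d}$ yields $\rho_{n,M-1}\le 1/2$, hence $m_{1/2}(n)\le 2\binom{n-1}{d}\le 4\binom{n}{d}$. This budget argument is precisely the content of the cited lemma, and it is the step your write-up leaves unproved.
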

%

We remark that a slightly different definition $m_{1/2}(n)= \min \{ m \le N ~ | ~ \E|\reduce(Y)| \le N/2\}$ is used in \cite{HKP2}, and it is equivalent to \eqref{eq:mcn} with $c=1/2$ by symmetry. 

\begin{prop}\label{integralreduce}
Let $\{Y_t\}_{0 \le t \le 1}$ be the $d$-Linial-Meshulam process on $n$
 vertices. Then, 
\[
 \int_0^1 \E |\reduce(Y_t)| dt \le 8 {n \choose d}. 
\]
\end{prop}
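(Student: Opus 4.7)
The plan is to directly combine Proposition~\ref{prop:bettiestimate} with the Hoffman-Kahle-Paquette bound from Lemma~\ref{lem:hkp}. Proposition~\ref{prop:bettiestimate} provides the estimate
\[
\int_0^1 \E|\reduce(Y_t)|\, dt \le \frac{m}{1-\rho_{n,m}}
\]
valid for every $m \le N = \binom{n}{d+1}$. The right-hand side is small only when $\rho_{n,m}$ is bounded away from $1$, which is exactly what the definition of $m_c(n)$ in \eqref{eq:mcn} guarantees.

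First I would specialize by choosing $m = m_{1/2}(n)$. By definition of $m_{1/2}(n)$ we have $\rho_{n,m_{1/2}(n)} \le 1/2$, so that $1 - \rho_{n,m_{1/2}(n)} \ge 1/2$. Then Lemma~\ref{lem:hkp} gives $m_{1/2}(n) \le 4\binom{n}{d}$. Substituting these into the inequality from Proposition~\ref{prop:bettiestimate} yields
\[
\int_0^1 \E|\reduce(Y_t)|\, dt \le \frac{m_{1/2}(n)}{1 - \rho_{n,m_{1/2}(n)}} \le \frac{4\binom{n}{d}}{1/2} = 8\binom{n}{d},
\]
which is exactly the claim.

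There is essentially no obstacle here: the two preceding results have been carefully designed so that this proposition falls out as an immediate corollary. The real work was done in Proposition~\ref{prop:bettiestimate} (via the Kruskal-Katona-type shadow bound and the stochastic domination argument using $\cnmd$) and in Lemma~\ref{lem:hkp}, which locates the threshold at which $\rho_{n,m}$ drops below $1/2$. The only thing to verify is that $m_{1/2}(n) \le N$ so that the hypothesis $m \le N$ of Proposition~\ref{prop:bettiestimate} is satisfied; this is automatic from Lemma~\ref{lem:hkp} provided $4\binom{n}{d} \le \binom{n}{d+1}$, i.e., for $n$ sufficiently large, and for small $n$ the bound is trivially controlled by the total number of $d$-simplices.
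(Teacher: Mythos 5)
Your proof is correct and is essentially identical to the paper's: both set $m = m_{1/2}(n)$ in Proposition~\ref{prop:bettiestimate}, use $\rho_{n,m}\le 1/2$ from the definition \eqref{eq:mcn}, and invoke Lemma~\ref{lem:hkp} to conclude $\int_0^1 \E|\reduce(Y_t)|\,dt \le 2m_{1/2}(n) \le 8\binom{n}{d}$. (Your closing worry about $m_{1/2}(n)\le N$ is automatically settled by the definition \eqref{eq:mcn}, which only ranges over $m\le N$.)
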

\begin{proof}
We set $m = m_{1/2}(n)$ in Proposition \ref{prop:bettiestimate}. 
Then, $\rho_{n,m} \le 1/2$ from \eqref{eq:mcn} and hence 
\[
 \int_0^1 \E |\reduce(Y_t)| dt 
\le \frac{m}{1-\rho_{n,m}} 
\le 2 m_{1/2}(n) \le 8 {n \choose d}. 
\]
\end{proof} 

Now we are in a position to prove our main result.
\vspace{0.3cm}\\
{\it Proof of Theorem \ref{thm:main}.}
By the lifetime formula (\ref{eq:int_proof}), 
Corollary \ref{cor:beta-cardinality} and \pref{integralreduce}, 
we obtain an upper bound
\[
\E[L_{d-1}]  =
\int_0^1 \E [{\beta}_{d-1}(t)] dt 
\le \frac{d+1}{n} \int_0^1 \E |\reduce(Y_t)| dt 
\le 8 \frac{d+1}{n} {n \choose d}
\sim \frac{8(d+1)}{d!} n^{d-1}. 
\]

Let us next consider a lower bound. 
Because of $t_\eta = 0$ for any $(d-1)$-simplex $\eta$ in the $d$-Linial-Meshulam process, the lifetime formula (\ref{eq:int_intro1}) leads to
\[
 L_{d-1} = \wt(T) \ge \sum_{i=1}^{|T|} u_i, 
\]
where $T$ is the minimum spanning $d$-acycle and 
$0 \le u_1 \le u_2 \le \dots \le u_N \le 1$ 
is the rearrangement of i.i.d. uniform random variables $\{t_\sigma\mid
 \sigma \in (\Delta_{n-1})_d \}$.  
We recall $|T| = {n-1 \choose d}$ from Example \ref{ex:cardinality}. 
Then, it follows from (\ref{eq:ordered}) that  
\begin{align*}
\E[L_{d-1}] 
&\ge \sum_{i=1}^{|T|} \E[u_i]  
= \sum_{i=1}^{|T|} \frac{i}{N+1} 
\sim \frac{d+1}{2d!} n^{d-1}. 
\end{align*}
This completes the proof.
\qed

\newcommand{\lex}{<_{lex}}
\newcommand{\tP}{\tilde{P}}

\subsection{Expected Lifetime Sum for Clique Complex Process}
In this subsection, we consider the clique complex process and show bounds for the expectation of the lifetime sum. 
For the derivation of the upper bound, we first recall the discrete Morse theory. 
\begin{defn}{\rm 
	Let $X$ be a simplicial complex on a vertex set $V$. A partial matching
	consists of a partition of $X$ into three sets $A,
	Q$, and $K$ along with a bijection $\phi:Q\rightarrow K$ such that
	$\sigma\subset \phi(\sigma)$ and $|\phi(\sigma)|=|\sigma|+1$ for each
	$\sigma\in Q$.  }
\end{defn}

We denote a partial matching by $\mathcal{M}=(A,\phi:Q\rightarrow K)$. 
Given a partial matching, we set a relation $\ll$ on $Q$ by extending 
transitively the relation $\triangleleft$ defined by 
\[
	Q'\triangleleft Q \Longleftrightarrow Q' \subset \phi(Q).
\]
A partial matching $\mathcal{M}$ is called \textit{an acyclic matching} 
if $\ll$ is a partial order.
The elements in $A$ are called \textit{critical} simplices.

\begin{thm}[\cite{forman}]\label{homotopic2CW}
Suppose $X$ is a simplicial complex with an acyclic matching $\mathcal{M}$. 
Then, $X$ is homotopy equivalent to a CW complex with exactly one $k$-cell for 
each critical $k$-simplex. 
\end{thm}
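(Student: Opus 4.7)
The plan is to proceed by induction on the number of matched pairs $|Q|$. The base case $|Q| = 0$ is immediate, since every simplex is then critical and $X$ itself is already a CW complex of the required form.

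For the inductive step I would aim to establish the following key lemma: acyclicity of $\ll$ guarantees the existence of a matched pair $(\sigma_0, \tau_0)$ with $\tau_0 = \phi(\sigma_0)$ such that $\sigma_0$ is a \emph{free face} of $\tau_0$ in $X$, meaning $\tau_0$ is the unique simplex of $X$ of dimension $\dim \sigma_0 + 1$ containing $\sigma_0$, and $\tau_0$ is itself a maximal face of $X$. Granted this lemma, the elementary collapse $X \searrow X' := X \setminus \{\sigma_0, \tau_0\}$ is a strong deformation retract, the restriction of $\mathcal{M}$ to $X'$ is an acyclic matching with one fewer pair and the same critical set, and applying the inductive hypothesis to $X'$ closes the argument. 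To produce such a pair I would take $\sigma_0 \in Q$ maximal with respect to $\ll$; this exists because $\ll$ is a partial order on a finite set. Any competing coface $\tau' \neq \tau_0$ of $\sigma_0$ in dimension $\dim \sigma_0 + 1$ that happens to be matched, say $\tau' = \phi(\sigma')$ with $\sigma' \in Q$, forces $\sigma_0 \triangleleft \sigma'$, contradicting maximality of $\sigma_0$.

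The main obstacle is the residual case of the lemma in which $\sigma_0$ has a \emph{critical} coface $\tau' \neq \tau_0$ of dimension $\dim \sigma_0 + 1$, or in which $\tau_0$ fails to be maximal in $X$. Handling this requires refining the selection of $\sigma_0$, typically by scheduling collapses layer by layer starting from top-dimensional simplices and working downward, so that critical simplices sitting above the current pair are dealt with first and do not block the collapse. A cleaner route I would actually prefer is algebraic discrete Morse theory: build a Morse chain complex $M_*$ whose $k$-th generators are the critical $k$-simplices and whose differential is assembled from gradient paths of $\mathcal{M}$, prove that $C_*(X)$ and $M_*$ are chain-homotopy equivalent via the explicit reduction maps defined by the matching, and then promote this chain equivalence to a geometric homotopy equivalence by constructing a CW complex whose attaching maps are extracted from the gradient paths. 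This mirrors the classical smooth Morse-theoretic argument and bypasses the delicate combinatorial bookkeeping of iterated elementary collapses.
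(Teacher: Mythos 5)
The paper offers no proof of this statement: it is imported verbatim from Forman \cite{forman} as a black box for the discrete Morse theory argument used in the proof of Theorem~\ref{thm:clique}, so there is no internal argument to compare against, and your proposal must stand on its own. As written it has a genuine gap. The inductive scheme --- find a matched pair $(\sigma_0,\tau_0)$ forming a free face, perform an elementary collapse onto a subcomplex of $X$, recurse --- can only ever prove that $X$ collapses onto the subcomplex of critical simplices, and in fact only proves the theorem in the degenerate case where the critical set is a single vertex. Your key lemma is false once critical cells are present: a $\ll$-maximal $\sigma_0\in Q$ may well have a \emph{critical} coface of dimension $\dim\sigma_0+1$, or $\tau_0=\phi(\sigma_0)$ may lie in a larger critical simplex, and then no elementary collapse removing $\{\sigma_0,\tau_0\}$ exists. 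You flag this residual case, but the proposed repair (``schedule collapses layer by layer so that critical simplices above are dealt with first'') is precisely where the whole content of the theorem lives: a critical simplex cannot be ``dealt with'' by any collapse; it must be recorded as a cell attachment, which forces the inductive statement to change from ``$X$ deformation retracts onto a subcomplex of itself'' to ``the sublevel complex is homotopy equivalent to an abstract CW complex built so far.'' Forman's actual argument does exactly this: fix a linear extension of the acyclicity order, build $X$ \emph{up} through sublevel complexes rather than collapsing down, and show that crossing a matched pair preserves homotopy type (an internal collapse) while crossing a critical $k$-simplex attaches a $k$-cell up to homotopy.

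The alternative route you say you would prefer has its own gap: algebraic discrete Morse theory produces a \emph{chain} homotopy equivalence between $C_*(X)$ and the Morse complex $M_*$, and a chain equivalence of cellular chain complexes does not by itself yield a homotopy equivalence of spaces. Promoting it to one requires actually constructing the CW complex with attaching maps extracted from gradient paths and then an argument in the spirit of Whitehead's theorem, which needs control of $\pi_1$ that the hypotheses do not supply; this is not a bookkeeping shortcut but a return to the geometric proof. (For what this paper actually uses --- the Morse inequality $\sum_{j}(-1)^{d-1-j}f_j(t)\le\beta_{d-1}(t)$ and the bound $\beta_{d-1}(t)\le f_{d-1}^*(t)$ --- the purely algebraic statement about homology would suffice, but it does not prove the theorem as stated.)
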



\def\lex{<_{lex}}
Let us construct a $(d-1,d)$-type acyclic matching 
$(A,\phi:Q\rightarrow K)$ as follows: 
\begin{enumerate}
\item Suppose that the vertex set $V$ is totally ordered as $1 < 2 < \dots <
|V|$ and it induces the lexicographic order $\lex$ on $X$. 
\item For $\sigma\in X_{d-1}$, if there exists
\[
	\tau = \lexmin\{\tilde{\tau} \in X_d \mid 
      \sigma \subset \tilde{\tau}, \sigma \lex \tilde{\tau}\}, 
\]
then we add $\phi:\sigma \mapsto \tau$ as a pairing.
\item All the remaining simplices are set to be critical. 
\end{enumerate}
This is the acyclic matching used in {\rm \cite{kahle}}. 

Now we derive the following bounds of the expected lifetime sum in the
clique complex process.  

\begin{thm}\label{thm:clique}
For the clique complex process $\{\CC(t)\}_{0 \le t \le 1}$, 
there exist positive constants $c$ and $C$ (depending on $d$) 
such that, as $n \to \infty$, 
\[
c n^{d-1} \le \E[L_{d-1}] \le C n^{d-1} \log n
\]
for $d=1,2$ and   
\[
c n^{\frac{(d+2)(d-1)}{2d}} \le \E[L_{d-1}] \le C n^{d-1}
\]
for $d \ge 3$.
\end{thm}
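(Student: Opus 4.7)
The plan is to apply the lifetime formula (\ref{eq:int_proof}) to write $\E[L_{d-1}] = \int_0^1 \E[\beta_{d-1}(\CC(t))]\,dt$ and to bound the integrand pointwise in $t$ by two complementary homological tools.

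For the upper bound I will use the $(d-1,d)$-type lex acyclic matching recalled before the theorem. The weak Morse inequality from \tref{homotopic2CW} gives $\beta_{d-1}(\CC(t)) \le |A_{d-1}(t)|$, the number of critical $(d-1)$-simplices. By the matching rule, $\sigma = \{v_0 < \cdots < v_{d-1}\}$ is critical at time $t$ iff $\sigma$ is a clique in $\K^{(1)}(t)$ and no vertex $w > v_{d-1}$ is jointly adjacent to every vertex of $\sigma$. Edge-independence then gives
$$
\E[|A_{d-1}(t)|] = t^{\binom{d}{2}} \sum_{j=d}^n \binom{j-1}{d-1}(1-t^d)^{n-j}.
$$
Integrating termwise and substituting $u = t^d$ expresses each integral as $(1/d)B(\alpha, n-j+1)$ with $\alpha = (d-1)(d-2)/(2d) + 1$. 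The asymptotic $B(\alpha,\beta) \sim \Gamma(\alpha)\beta^{-\alpha}$ reduces the bound to the order of $\sum_{k=1}^n \binom{n-k}{d-1}\,k^{-\alpha}$, which is $\Theta(n^{d-1}\log n)$ when $\alpha=1$ (that is, $d \in \{1,2\}$) and $\Theta(n^{d-1})$ when $\alpha > 1$ (that is, $d \ge 3$), matching the two stated cases.

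For the lower bound I will exploit the pointwise rank-nullity inequality
$$
\beta_{d-1}(X) = f_{d-1}(X) - \rank(\partial_{d-1}) - \rank(\partial_d) \ge f_{d-1}(X) - f_{d-2}(X) - f_d(X),
$$
valid for any simplicial complex $X$. Taking expectations with $\E[f_k(\CC(t))] = \binom{n}{k+1}t^{\binom{k+1}{2}}$ and setting $t = \epsilon n^{-1/d}$, both the $f_{d-1}$ and $f_d$ terms are of order $n^{(d+1)/2}$, while the $f_{d-2}$ term is of strictly smaller order $n^{(d+1)/2 - 1/d}$. Choosing $\epsilon > 0$ small enough that $\epsilon^{d(d-1)/2}/d! - \epsilon^{d(d+1)/2}/(d+1)!$ is positive, the integrand is at least $c_1 n^{(d+1)/2}$ on an interval of length $\Theta(n^{-1/d})$ around $\epsilon n^{-1/d}$, so $\E[L_{d-1}] \ge c_2 n^{(d+1)/2 - 1/d} = c_2 n^{(d+2)(d-1)/(2d)}$. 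This exponent equals $d-1$ when $d \in \{1,2\}$, yielding the claim uniformly.

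The main obstacle is the Beta-function analysis in the upper bound: the asymptotic $B(\alpha,\beta) \sim \Gamma(\alpha)\beta^{-\alpha}$ fails when $\beta$ is bounded, so the terms with $j$ close to $n$ (where $n-j = O(1)$) must be estimated separately to confirm that the full sum is governed by the tail, which is either convergent (for $\alpha > 1$) or logarithmically divergent (for $\alpha = 1$). Acyclicity of the lex matching used in the Morse step is nontrivial but is already available from \cite{kahle}.
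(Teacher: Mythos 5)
Your proposal is correct and follows essentially the same route as the paper: the lower bound via the Morse inequality $\beta_{d-1}\ge f_{d-1}-f_{d-2}-f_d$ evaluated at the scale $t\asymp n^{-1/d}$, and the upper bound via the number of critical $(d-1)$-simplices of the lex acyclic matching, whose expectation $t^{\binom{d}{2}}\sum_{j}\binom{j-1}{d-1}(1-t^d)^{n-j}$ is exactly the paper's $\E[f_{d-1}^*(t)]$. The only (cosmetic) difference is that you integrate this expression termwise via Beta functions, whereas the paper first derives the pointwise bound $\binom{n}{d-1}t^{\binom{d}{2}-d}$ and, for $d=1,2$ where that bound is non-integrable near $0$, splits the integral at $n^{-1/d}$ and uses $\beta_{d-1}\le f_{d-1}$ on the initial segment; both evaluations give the same orders.
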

We remark that $d-1=\frac{(d+2)(d-1)}{2d}$ for $d=1,2$.
\begin{proof} 
Let us write $f_i(t)=f_i(\cC(t))$ and $\beta_i(t)=\beta_i(\cC(t))$.
We recall the Morse inequality 
\[
\sum_{j=d-2}^d (-1)^{d-1-j} f_j(t) 
\le {\beta}_{d-1}(t). 
\]
We observe that 
\begin{align*}
\int_0^{\left(\frac{d+1}{n}\right)^{1/d}}
\E[ f_j(t) ] dt 
&= 
\int_0^{\left(\frac{d+1}{n}\right)^{1/d}}
{n \choose j+1} t^{{j+1 \choose 2}} dt \\
&\sim A_j^{(d)} n^{j+1 - \frac{1}{d}\{{j+1 \choose 2} + 1\}}
\\
&=
\begin{cases}
A_j^{(d)} n^{\frac{(d+2)(d-1)}{2d}}, & j=d-1,d.\\ 
A_j^{(d)} n^{\frac{(d+2)(d-1)-2}{2d}}, & j=d-2. 
\end{cases}
\end{align*}
It is easy to check that $c_{d-1} := A_{d-1}^{(d)} - A_{d}^{(d)}>0$ for
 every $d \ge 1$. 
Therefore, by (\ref{eq:int_proof}) and the Morse inequality, we see that 
\begin{align*}
 \E[ L_{d-1} ]
&\ge 
\int_0^{\left(\frac{d+1}{n}\right)^{1/d}}
\E[{\beta}_{d-1}(t)]dt \\
&\gtrsim c_{d-1} n^{\frac{(d+2)(d-1)}{2d}}. 
\end{align*}
This yields the lower bound since 
$\frac{(d+2)(d-1)}{2d} = d-1$ for $d=1,2$. 

For the upper bound, we use the discrete Morse theory. 
Let $f_{d-1}^*(t)$ be the number of critical $(d-1)$-simplices 
in the $(d-1, d)$-type acyclic matching for $\CC(t)$. 
It follows from Theorem~\ref{homotopic2CW} that
\[
 {\beta}_{d-1}(t) \le \min\{f_{d-1}(t), f_{d-1}^*(t)\}. 
\]
The expectation of the first term is given by $\E[f_{d-1}(t)]={n\choose d}t^{{d\choose 2}}$. On the other hand, we compute $\E[f_{d-1}^*(t)]$ as 
\begin{align*}
\E[f_{d-1}^*(t)]
&= \sum_{\sigma \in (\Delta_{n-1})_{d-1}} \P(\text{$\sigma$ is
critical}) \\
&= \sum_{j=d}^n \sum_{1 \le i_1< i_2 < \dots < i_{d-1} < j} 
\P(\text{$\{i_1,i_2, \dots, i_{d-1},j\}$ is critical}). 
\end{align*}
A $(d-1)$-simplex $\{i_1,i_2, \dots, i_{d-1},j\}$ is critical if and only if 
the $d$-simplex $\{i_1,i_2, \dots, i_{d-1},j,k\}$ does not appear for any $k \ge j+1$. 
Hence, we obtain 
\begin{eqnarray*}
\E[f_{d-1}^*(t)]
&=& \sum_{j=d}^n {j-1 \choose d-1} t^{{d\choose 2}}(1-t^d)^{n-j}\\
&\le&{n\choose d-1}t^{{d\choose 2}}\sum_{j=d}^n(1-t^d)^{n-j}\\
&=&{n\choose d-1}t^{{d\choose 2}-d},
\end{eqnarray*}
and
\[
 \E[{\beta}_{d-1}(t)] \le \min
\left\{
{n \choose d} t^{{d \choose 2}}, \ 
{n \choose d-1} t^{{d \choose 2} - d}
\right\}. 
\]

Therefore, we obtain
\[
\E[L_{d-1}] \le \int_0^1 {n \choose d-1} t^{{d \choose 2} - d} dt 
= O(n^{d-1}) 
\]
for $d \ge 3$ and
\begin{align*}
\E[L_{d-1}] 
& \le 
\int_0^{n^{-1/d}} {n \choose d} t^{{d \choose 2}} dt 
+ \int_{n^{-1/d}}^1 {n \choose d-1} t^{{d \choose 2} - d} dt \\
&= O(n^{\frac{(d-1)(d+2)}{2d}}) + O(n^{d-1} \log n) \\
&= O(n^{d-1} \log n) 
\end{align*}
for $d=1,2$. 
This completes the proof. 
\end{proof}

\section{Concluding Remarks}\label{sec:conclusion}
\subsection{Limiting Constant}
A detailed analysis of the Linial-Meshulam complex $\cK^{(d)}(t)$ at 
time $t = c/n \ ( c \ge 0)$ has recently been reported in \cite{lp}. 
By applying their results, we formally show that the limit 
$I_{d-1}:=\lim_{n\rightarrow \infty}\frac{1}{n^{d-1}}\E[L_{d-1}]$ can be expressed 
by an integral form which recovers $I_0=\zeta(3)$ for $d=1$. 

As was studied in the Erd\"os-R\'enyi graph, Poisson trees play an important role to characterize the Linial-Meshulam complex at $t=c/n$.
By using the spectral measure of the upper $(d-1)$-dimensional 
Laplacian $\partial_{d} \partial_{d}^T$ 
obtained from the boundary operator, Linial-Peled \cite{lp} basically show the following:  
let $t_d^*$ be the unique root in $(0,1)$ of the following equation
\[
 (d+1) (1-t) + (1+dt) \log t = 0 
\]
and set $c_d^* = \psi_d(t_d^*)$ by
\[
 \psi_d(t) = \frac{- \log t}{(1-t)^d}, \quad  t \in (0,1). 
\]
For $d=1$, we understand $t_1^* = c_1^* = 1$. 
Let $t = t_c$ be the smallest positive root of the equation $t =
e^{-c(1-t)^d}$. 
Then, for every $c > c_d^*$, 
\[
\frac{1}{{n \choose d}} \E[\beta_d(c/n)] 
= (1+ o(1)) 
\{c t_c (1-t_c)^d + \frac{c}{d+1}(1-t_c)^{d+1} -
(1-t_c)\}
\]
holds with probability tending to $1$ as $n \to \infty$. 

Now, by applying this asymptotic formula into \eqref{lmd}, we have
\begin{align*}
\frac{1}{{n \choose d}} 
\E [{\beta}_{d-1}(c/n)]
&= \frac{1}{{n \choose d}} 
\left\{\E [{\beta}_d(c/n)] + {n-1 \choose d} -
 \E [f_d(c/n)] \right\} \\
&= (1+o(1)) 
\underbrace{\left\{c t_c (1-t_c)^d + \frac{c}{d+1}(1-t_c)^{d+1} +
 t_c - \frac{c}{d+1}\right\}}_{=: h_d(c)}. 
\end{align*}
Then, integrating both sides roughly yields
\begin{align*}
\frac{1}{n^{d-1}} \int_0^1 \E [{\beta}_{d-1}(t)] dt 
&\sim \frac{1}{d!} {n \choose d}^{-1} 
\int_0^n \E [{\beta}_{d-1}(c/n)] dc
\approx \frac{1}{d!} \int_0^{\infty} h_d(c) dc
\end{align*}
as $n\rightarrow \infty$.
We should remark that the part $\approx$ is the rough derivation. 
From this formal discussion, we conjecture that the limit exists and the limiting constant is given by 
\[
I_{d-1}=\lim_{n\rightarrow \infty}\frac{1}{n^{d-1}}\E[L_{d-1}] 
= \frac{1}{d!} \int_0^{\infty} h_d(c) dc.
\]

Actually, this integral form recovers $I_0=\zeta(3)$, which is nothing but Frieze's $\zeta(3)$-limit theorem. Namely, for $d=1$, 
we have $t_c=1$ for $0 \le c \le 1$ and 
$t_c=\psi_1^{-1}(c)$ for $c \ge 1$. 
Then, we obtain
\begin{align*}
 I_0
&= \int_0^{1} (1 - \frac{c}{2}) dc + 
 \int_{1}^{\infty} h_1(c) dc \\
&= \frac{3}{4} + \int_0^1 
\frac{(2-2t + t \log t)(1- t + t \log t)}{2(1-t)^3} dt \\
&=\zeta(3). 
\end{align*}
Here, the second equality follows 
by the change of variables $c = \psi_1(t)$ for $1 \le c < \infty$. 

\subsection{Central Limit Theorem}
After Frieze's work, Janson \cite{janson} proved the central limit theorem
\[
	\sqrt{n} (L_{0} - \zeta(3)) \stackrel{d}{\Longrightarrow} N(0,\sigma^2)
\]
with $\sigma^2 = 6\zeta(4) - 4\zeta(3)$. 
Hence, the next interesting problem is to find the variance of $L_{d-1}$, and furthermore to establish 
the (functional) central limit theorem in our setting.
To this aim, we feel that we need more detailed study of structures of spanning acycles.

\subsection{Limit Theorem of Persistence Diagram}
In Section 5, we viewed random persistence diagrams as point processes on $\Delta$. 
Then, the lifetime sum $L_{d-1}$ is a functional of this point process and we derived its order in this paper. 
So, another natural problem is to establish the limit theorem for persistence diagrams.
Moreover, limit theorems for barcodes and persistent landscapes for the Linial-Meshulam processes and the clique complex process 
should also be studied in connection with persistence diagrams.

\subsection{Order in the Clique Complex Process}
Theorem \ref{thm:clique} shows the upper and lower bounds of $\E[L_{d-1}]$ in the clique complex process, but the explicit order has not yet been obtained at present. 
We performed numerical experiments to observe $\E[L_{d-1}]$ with respect to the number of vertices. From these computations, we observe that the upper bound seems to be correct for $d=2$, although
the lower bound is the right order for $d=1$ (Frieze's $\zeta(3)$-limit theorem).

\subsection{Asymptotics of \mbox{\boldmath$\ell^2$}-norm}
In Remark \ref{rem:landscape}, we showed the integral formula for the $\ell^2$-norm $\|\vec{l}\|_2$ of the sequence $\vec{l} = (l_i)_{i=1}^p$ of the lifetimes in connection with the persistence landscape. 
Then, it seems to be interesting to study asymptotic behaviors of $\|\vec{l}\|_2$ in a similar spirit to our main result. For this purpose, we would like to also derive an algebraic formulation of $\|\vec{l}\|_2$ corresponding to (\ref{eq:int_intro1}). 

\subsection{Wilson's Algorithm}
The Wilson's algorithm \cite{wilson} provides a fast algorithm of 
sampling uniform spanning trees by using loop-erased random walks on graphs. 
It would be natural to ask a generalization of the Wilson's algorithm 
producing uniform spanning acycles. 
To this aim, there are two things we need to consider. 
One is to find a natural candidate of loop-erased random walks defined 
on simplicial complexes. 
The other  is to give a right meaning of \textit{uniform} 
under the presence of the nontrivial factor (\ref{eq:det_homology})
in (\ref{eq:det_expansion}). In case of graphs ($d=1$), since this factor is always $1$, the weight of each spanning tree is not biased.

\section*{Acknowledgement}
This work is partially supported by JSPS Grant-in-Aid (26610025, 26287019).

\end{document}